\newcommand{\msc}[2][2000]{%
  \let\@oldtitle\@title%
  \gdef\@title{\@oldtitle\footnotetext{#1 \emph{Mathematics subject
        classification.} #2}}% 
}
\theoremstyle{plain}
\newtheorem{theorem}{Theorem}[section]
\newtheorem{lemma}[theorem]{Lemma}
\newtheorem{corollary}[theorem]{Corollary}
\newtheorem{proposition}[theorem]{Proposition}
\theoremstyle{remark}
\newtheorem{remark}[theorem]{Remark}
\newtheorem{example}[theorem]{Example}
\def\C{{\mathbb C}}% complex numbers
\def\R{{\mathbb R}}% real numbers
\def\N{{\mathbb N}}% nonnegative integers
\def\Z{{\mathbb Z}}% integers
\def\T{{\mathbb T}}% torus
\def\H{{\mathcal H}}
\def\O{\mathcal O}
\def\F{\mathcal F}
\def\({\left(}
\def\){\right)}
\def\<{\left\langle}
\def\>{\right\rangle}
\def\le{\leqslant}
\def\ge{\geqslant}
\def\Tend#1#2{\mathop{\longrightarrow}\limits_{#1\rightarrow#2}}
\def\d{{\partial}}
\def\eps{\varepsilon}
\def\si{{\sigma}}
\def\1{{\mathbf 1}}
\def\w{{w}}
\DeclareMathOperator{\RE}{Re}
\DeclareMathOperator{\IM}{Im}
\def\E{{\mathbb E}}
\def\FF{F}
\def\GG{G}
\numberwithin{equation}{section}
\newcommand{\mynegspace}{\hspace{-0.12em}}
\newcommand{\vvvert}{\rvert\mynegspace\rvert\mynegspace\rvert}
\begin{document}

\title[WKB analysis of non-elliptic NLS]{WKB analysis of
  non-elliptic nonlinear Schr\"odinger equations}   

\author[R. Carles]{R\'emi Carles}
\address[R. Carles]{Univ Rennes, CNRS\\ IRMAR, UMR 6625\\ F-35000 Rennes\\ France}
\email{Remi.Carles@math.cnrs.fr}
\author[C. Gallo]{Cl\'ement Gallo}
\address[C. Gallo]{IMAG\\ Univ Montpellier, CNRS\\ Montpellier\\ France}
\email{Clement.Gallo@umontpellier.fr}

\begin{abstract}
  We justify the WKB analysis for generalized nonlinear Schr\"odinger
  equations (NLS), including the hyperbolic NLS and the
  Davey-Stewartson II system. Since the leading order system in this
  analysis is not hyperbolic, we work with analytic regularity, with a
  radius of analyticity decaying with time, in order to obtain better
  energy estimates. This provides qualitative information regarding
  equations for which global well-posedness in Sobolev spaces is
  widely open. 
\end{abstract}
\maketitle

\section{Introduction}

\subsection{Motivation}
\label{sec:motivation}
The two-dimensional ``hyperbolic'' nonlinear Schr\"odinger equation,
\begin{equation}
  \label{eq:NLShyp}
  i\d_t \psi + \frac{1}{2}\d_{1}^2 \psi - \frac{1}{2}\d_{2}^2 \psi \pm |\psi|^2\psi=0, \quad
  (x_1,x_2)\in \R^2,
\end{equation}
appears in nonlinear optics (see e.g. \cite{DLS16,Sulem}), but remains quite
mysterious as far as analysis is concerned: it is locally well-posed
in $H^s(\R^2)$ for any $s>0$, it is $L^2$-critical,
hence locally well-posed in $L^2(\R^2)$ (with a suitable definition of
local well-posedness in this critical case), but apart from the small
data case, the global existence issue remains a delicate issue in such
spaces, even though refined Strichartz estimates are available
\cite{RoVa06}, because the conserved energy is not a positive
functional,
\begin{equation*}
  E=\|\d_1 \psi\|_{L^2(\R^2)}^2 - \|\d_2 \psi\|_{L^2(\R^2)}^2 \mp
  \|\psi\|_{L^4(\R^2)}^4. 
\end{equation*}
Note that the sign  
of the nonlinearity is rather irrelevant, since we may exchange the
roles of $x_1$ and $x_2$. However, global existence in $H^s(\R^2)$ for $s>0$ is
obtained through modulation approximation in \cite{Totz16}. 
On the other hand, global solutions under
the form of spatial standing waves have been
studied in \cite{CoFi-p,KNZ11}, along with their stability.
\smallbreak

Similarly, the Davey--Stewartson system
\begin{equation}
  \label{eq:DS2}
  \left\{
    \begin{aligned}
    & i\d_t \psi + \frac{1}{2}\d_{1}^2 \psi - \frac{1}{2}\d_{2}^2 \psi
    =  \(\chi |\psi|^2 
     +\omega\d_1 \phi\)\psi, \quad
     (x_1,x_2)\in \R^2,\quad \chi,\omega\in \R,\\
     &\d_1^2\phi+\d_2^2\phi = \d_1 |\psi|^2,
     \end{aligned}
  \right.
\end{equation}
is locally well-posed in the same spaces, $L^2$-critical, and enjoys a
Hamiltonian structure with an energy whose sign is indefinite.
Indeed, \eqref{eq:DS2} can be rewritten
\begin{equation*}
  i\d_t \psi + \frac{1}{2}\d_{1}^2 \psi - \frac{1}{2}\d_{2}^2 \psi
    =  \(\chi |\psi|^2 
     +\omega  K\ast |\psi|^2\)\psi,
   \end{equation*}
   where the symmetric kernel is such that
   \begin{equation*}
     \widehat K(\xi) = \frac{\xi_1^2}{\xi_1^2+\xi_2^2}. 
   \end{equation*}
On the
other hand, for a suitable combination of the coefficients $\chi$ and
$\omega$, that is $2\chi+\omega=0$, \eqref{eq:DS2} is completely
integrable (see
e.g. \cite{GhSa90,KlSa-p}). Global well-posedness and scattering in
$L^2$ for the defocusing case were recently established in
this specific case thanks to inverse scattering and harmonic analysis
techniques, see \cite{NRT-p}. 
In this note, we justify the approximation
of such equations in a high frequency regime, known as semi-classical
limit, this giving some extra information concerning the dynamics
associated to these equations.

\subsection{Setting}
\label{sec:setting}

We consider the equation, including both \eqref{eq:NLShyp} and \eqref{eq:DS2},
\begin{equation}
  \label{eq:nls}
  i\eps\d_t u^\eps +\frac{\eps^2}{2}D^2 u^\eps
  +i\eps\<\beta,\nabla\left[g\(|u^\eps|^{2}\)u^\eps\right]\>=Vu^\eps+
  \sum_{j=1}^J \left(K_j*|u^\eps|^{2\sigma_j}\right)u^\eps,
\end{equation}
in the semi-classical limit $\eps \to 0$, where $\eps>0$, $T>0$, $d\ge
1$ is the spatial dimension, $J\ge 1$, and $(t,x)\in[0,T]\times \E^d $. More specifically,
\begin{itemize}
\item $u^\eps(t,x)\in\C$ is the wave function 
\item $\mathbb{E}^d$ can be either $\R^d$ or to the torus $\T^d=\(\R/2\pi\Z\)^d$, 
\item \hfill
  $D^2=\underset{j,k=1}{\overset{d}{\sum}}\eta_{j,k}\partial_j\d_k=\<\nabla,
  H\nabla\>,$\hfill\null\\ where $H=(\eta_{j,k})_{1\le j,k\le d}$ is a
  symmetric (not necessarily positive or invertible) real matrix, and
  $\<\cdot,\cdot\>$ denotes the inner product on $\R^d$,
\item $\beta=(\beta_j)_{1\le j\le d}\in\R^d$ and $g(s)=\alpha s^\gamma$, where $\alpha\in\R$ and $\gamma\in\N\string\ \{0\}$. We consider such a function $g$ in order to simplify the notations, but our method also works if $g$ is not a monomial.
\item For $j\in \{1,\dots,J\}$, $\sigma_j\in\N\string\ \{0\}$ is an
  integer, and $K_j$ denotes a tempered distribution with a
   bounded Fourier transform $\widehat{K_j}\in L^\infty(\E^d)$. This
  covers the case where $K=\delta$,  typically as in \eqref{eq:NLShyp}. 
\item $V=V(t,x)$ is a potential. $V$ is supposed to be analytic in the
  $x$ variable. More precisely, we assume that $V$ belongs to the
  space $L^2_{T_0}\H_{w_0}^{\ell+1/2}$ for some $T_0>0$, $w_0>0$,
  $\ell>(d+1)/2$, a space that will be defined below. 
\end{itemize}
Our motivation for considering the case $\E^d=\T^d$ lies in the fact
that numerical simulations are often performed in a periodic box:
unless suitable absorbing boundary conditions are imposed, the
observed dynamics is that of \eqref{eq:nls} on $\T^d$, which is fairly
different from the one on $\R^d$. 
\begin{remark}
  In view of the
  assumption that is usually made on $V$ in order to get $H^s$
  solutions (namely, $\d^\alpha V\in L^\infty$ for $2\le|\alpha|\le
  s$, see e.g. \cite{CaBKW}), it is reasonable to ask for analyticity of
  $V$ in order to get analytic solutions.  
\end{remark}
\begin{remark}
  In a similar fashion as we consider an external potential, our
  analysis exports to the magnetic case, where
  \[D^2=
    \underset{j,k=1}{\overset{d}{\sum}}\eta_{j,k}\(\partial_j-iA_j\)\(\d_k-iA_k\),\]
  provided that the magnetic potentials $A_j$ are analytic (in the same
  sense as for $V$). 
\end{remark}

The initial data that we 
consider are WKB states:
\begin{equation}
  \label{eq:ci-u}
  u^\eps(0,x)=a_0^\eps(x)e^{i\phi_0^\eps(x)/\eps}=:u_0^\eps(x),
\end{equation}
where $\phi_0^\eps:\E^d\to \R$ is a real-valued phase, and $a_0^\eps:\E^d\to \C$
is a possibly complex-valued amplitude. We emphasize that our approach
is distinct from the polar decomposition known as Madelung transform,
hence the possibility for the amplitude to be (or become) complex.
Our goal is to understand the 
semi-classical limit of equation (\ref{eq:nls}), that is to describe
the behavior in the limit $\eps\to 0$ of the solutions to
(\ref{eq:nls}) with initial data (\ref{eq:ci-u}). Generalizing the
idea of \cite{Grenier98}, we remark that if
$(\phi^\eps,a^\eps)$ solves the system 
\begin{equation}\label{eq:syst-grenier}
  \left\{
    \begin{aligned}
      &\d_t \phi^\eps + \frac{1}{2}\<\nabla
      \phi^\eps, H\nabla\phi^\eps\>+g(|a^\eps|^2)\<\beta,
      \nabla\phi^\eps\>+\sum_{j=1}^J K_j \ast
      |a^\eps|^{2\sigma_j}+V=0  ,\\
    &  \qquad \qquad \phi^\eps_{\mid t=0}=\phi_0^\eps,\\ 
& \d_t a^\eps +\<\nabla \phi^\eps , H\nabla a^\eps\>
+\frac{1}{2}a^\eps D^2 \phi^\eps+\<\beta,
\nabla\left[g(|a^\eps|^2)a^\eps\right] \>=\frac{i\eps}{2}D^2 a^\eps,\\
&\qquad \qquad a^\eps_{\mid t=0}=a_0^\eps,
    \end{aligned}
\right.
\end{equation}
then 
\[u^\eps(t,x)=a^\eps(t,x)e^{i\phi^\eps(t,x)/\eps}\]
solves (\ref{eq:nls})-(\ref{eq:ci-u}). Therefore,  we focus on
(\ref{eq:syst-grenier}).  Note that $\phi^\eps$, there, remains
real-valued, while $a^\eps$ will be complex-valued
(even if $a_0^\eps$ is real), due to the term $i\eps D^2 a^\eps$,
which is a remain of dispersive effects in the initial Schr\"odinger
equation.
\begin{example}
  In the case of the hyperbolic NLS \eqref{eq:NLShyp}, $d=2$ and the equations
  in \eqref{eq:syst-grenier} read
  \begin{equation*}
    \left\{
      \begin{aligned}
        &\d_t \phi^\eps +
        \frac{1}{2}\(|\d_1\phi^\eps|^2-|\d_2\phi^\eps|^2\)\mp |a^\eps|^2=0  ,\\
& \d_t a^\eps +\d_1 \phi^\eps \d_1 a^\eps-\d_2 \phi^\eps \d_2 a^\eps
+\frac{1}{2}a^\eps \(\d_1^2
\phi^\eps-\d_2^2\phi^\eps\)=\frac{i\eps}{2}\(\d_1^2a^\eps-\d_2^2a^\eps\). 
      \end{aligned}
    \right.
  \end{equation*}
  Passing formally to the limit $\eps\to 0$, and setting $\rho=|a|^2$, $v= \nabla
  \phi$, and $\tilde v = (\d_1\phi,-\d_2\phi)^T$, we find, respectively,
   \begin{equation*}
    \left\{
      \begin{aligned}
        &\d_t v_j + v_1\d_1v_j-v_2\d_2 v_j \mp \d_j \rho=0  ,\\
& \d_t \rho +\d_1(\rho v_1)-\d_2(\rho v_2) =0,
      \end{aligned}
    \right.
    \qquad
    \left\{
      \begin{aligned}
        &\d_t\tilde v +\<\tilde v , \nabla\> \tilde v \mp
        \begin{pmatrix}
          \d_1\rho\\
          -\d_2 \rho
        \end{pmatrix}
        =0  ,\\
& \d_t \rho +\<\nabla, \rho \tilde v\>=0.
      \end{aligned}
    \right.
  \end{equation*}
  No special structure such as symmetry (ensuring the hyperbolicity of
  the system) seems to be available here. Because of this, we work with
  analytic regularity, since Sobolev regularity is hopeless in such a
  case (see \cite{LNT18,GuyCauchy}). 
\end{example}

\smallbreak

In \cite{CaGa17b}, we have already addressed the issue of the
semi-classical limit of (\ref{eq:nls}) in the case where $d=1$,
$\E^d=\R$, $D^2=\d^2_x$, where the nonlinearity is local (that is,
$K=\delta$) and where $V=0$. We show that the
method that was used in \cite{CaGa17b} can be generalized  
\begin{itemize}
\item To any dimension of the space variable,
\item To the torus $\T^d$,
\item To a second order operator $D^2$ which is not necessarily elliptic,
\item To nonlocal nonlinearities,
\item To non-zero potentials $V$.
\end{itemize}
Note that at least for the first three aspects evoked above, our
approach provides results which can be established by following the
same strategy as in \cite{PGX93} (case $x\in \T^d$) and
\cite{ThomannAnalytic} (case $x\in \R^d$), based on the notion
of analytic symbols, as developed by J.~Sj\"ostrand \cite{Sjo82}. On
the other hand, incorporating nonlocal nonlinearities seems to be
easier when relying on a notion of (time dependent) analyticity based on Fourier
analysis, as in \cite{GV01} (and \cite{MoVi11} for the same idea
in a different context); see the next
subsection for more details.

%In particular, our assumptions cover the case of the Davey-Stewartson II equation, for $d=2$, $x\in\R^2$, $D^2=\d_1^2-\d_2^2$, $g=0$ and 
%\begin{equation}
 % \label{eq:NL}
%\widehat{K}(\xi)=\frac{\xi_1^2}{|\xi|^2}.
%\end{equation}

\subsection{The functional framework.}
\label{sec:rigor1}
For $\w \ge  0$ and $\ell\ge 0$, we consider the space
\begin{equation*}
\H_\w^\ell=\{\psi\in L^2(\E^d),\quad \|\psi\|_{\H_\w^\ell}<\infty\},
\end{equation*}
where
\begin{equation*}
  \| \psi\|_{\H_\w^\ell}^2 := \left\{
    \begin{aligned}
               &\int_{\R^d}\<\xi\>^{2\ell}e^{2\w\<\xi\>}
               |\widehat\psi(\xi)|^2d\xi&\text{ if }\quad
               \E^d=\R^d,\\ 
               & \underset{m\in\Z^d}{\sum}\<m\>^{2\ell}e^{2\w\<m\>}
               |\widehat\psi(m)|^2&\text{ if }\quad \E^d=\T^d,
    \end{aligned}
  \right.
\end{equation*}
with $\<\xi\>=\sqrt{1+|\xi|^2}$, and where the Fourier transform and
series are defined by
\begin{equation*}
  \widehat \psi(\xi)=\F \psi(\xi)=\frac{1}{(2\pi)^{d/2}}\int_{\R^d} \psi(y)e^{-i \<\xi, y\>}dy\quad {\rm if}\quad \E^d=\R^d,
\end{equation*}
\begin{equation*}
\widehat{\psi}(m)=\F \psi(m)=\frac{1}{(2\pi)^{d/2}}\int_{\T^d}\psi(y)e^{-i\<m, y\>}dy\quad {\rm if}\quad \E^d=\T^d.
\end{equation*}
We obviously have the monotonicity property,
\begin{equation}\label{eq:monotone}
  0\le \w_1\le \w_2\quad \Longrightarrow\quad
  \|\psi\|_{\H_{\w_1}^\ell}\le \|\psi\|_{\H_{\w_2}^\ell}.
\end{equation}
The interest of considering a time-dependent, decreasing, weight $\w$
is that energy estimates become similar to parabolic estimates, since
\begin{equation}\label{eq:evol-norm}
  \frac{d}{dt}\|\psi\|_{\H_\w^\ell}^2 = 2\RE\(\psi,\d_t
  \psi\)_{\H_\w^\ell} +2\dot \w \|\psi\|_{\H_\w^{\ell+1/2}}^2,
\end{equation}
where $\(\cdot,\cdot\)_{\H_\w^\ell}$ denotes the natural inner product
stemming from the above definition.
We choose a weight
$\w=\w(t)=\w_0-Mt$, where $\w_0>0$ and $M>0$ are fixed. For $T>0$, we work in spaces such as
\begin{equation*}
C([0,T],\H_\w^\ell)=\left\{\psi\ |\
  \mathcal{F}^{-1}\(e^{\w(t)\<\xi\>}\widehat{\psi}\)\in
  C([0,T],\H_0^\ell)=C([0,T],H^\ell)\right\}, 
\end{equation*}
where $H^\ell=H^\ell(\E^d)$ is the standard Sobolev space, or
\begin{equation*}
L^2([0,T],\H_\w^\ell)=L^2_T\H_\w^\ell=\left\{\psi\ |\
  \int_0^T\|\psi(t)\|_{\H_{\w(t)}^\ell}^2dt<\infty\right\}.
\end{equation*}
Phases and amplitudes belong to spaces
\begin{align*}
Y_{\w,T}^\ell=C([0,T],\H_\w^{\ell})\cap
L^2_T \H_\w^{\ell+1/2},
\end{align*}
and the fact that phase and amplitude do not have exactly the same
regularity shows up in the introduction of the space
\begin{align*}
X_{\w,T}^\ell=Y_{\w,T}^{\ell+1}\times Y_{\w,T}^\ell,
\end{align*}
which is reminiscent of the fact that in the case where the operator
on the left hand side of \eqref{eq:syst-grenier} is hyperbolic
(typically, starting from a defocusing cubic Schr\"odinger equation  with the
standard Laplacian), the good
unknown is $(\nabla \phi^\eps,a^\eps)$ rather than $(\phi^\eps,a^\eps)$
(see \cite{Grenier98}). 
The space $X_{\w,T}^\ell$ is endowed with the norm 
$$\|(\phi,a)\|_{X_{\w,T}^\ell}=\vvvert\phi\vvvert_{\ell+1,T}+\vvvert
a\vvvert_{\ell,T}, $$ 
where
\begin{equation}\label{eq:triple}
  \vvvert \psi\vvvert_{\ell,t}^2 = \max\left(\sup_{0\le s\le
    t}\|\psi(s)\|_{\H_{\w(s)}^\ell} ^2,2M\int_0^t 
  \|\psi(s)\|_{\H_{\w(s)}^{\ell+1/2}}^2 ds\right). 
\end{equation}

\subsection{Main results}
\label{sec:main-results}

Our first result states local well-posedness for
(\ref{eq:syst-grenier}) in this functional framework.
\begin{theorem}\label{th:wp}
Let $\w_0>0$, $\ell>(d+1)/2$, $T_0>0$, $V\in L^2_{T_0}\H_{w_0}^{\ell+1/2}$ and $(\phi_0^\eps,a_0^\eps)_{\eps\in[0,1]}$ be
a bounded family in $\H_{\w_0}^{\ell+1}\times\H_{\w_0}^{\ell}$. Then,
provided $M=M(\ell)>0$ is chosen sufficiently large, for all $\eps\in [0,1]$,
there is a unique solution $(\phi^\eps,a^\eps)\in X_{\w,T}^\ell$ to
\eqref{eq:syst-grenier}, where $\w(t)=\w_0-Mt$ and
$T=T(\ell)<\min\(\w_0/M,T_0\)$. Moreover, up to the choice of a possibly larger value for
$M$ (and consequently a smaller one for $T$), we have the estimates
$$\vvvert\phi^\eps\vvvert_{\ell+1,T}^2\le
4\|\phi_0^\eps\|_{\H_{\w_0}^{\ell+1}}^2+\max_{1\le j\le J}\|a_0^\eps\|_{\H_{\w_0}^{\ell}}^{4\sigma_j} +\|V\|^2_{L^2_{T_0}\H_{w_0}^{\ell+1/2}},\qquad
\vvvert a^\eps\vvvert_{\ell,T}^2\le
2\|a_0^\eps\|_{\H_{\w_0}^{\ell}}^2.$$ 
\end{theorem}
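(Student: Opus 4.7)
The plan is to build the solution by a Picard-type iteration that is tailored to exploit the parabolic gain encoded in \eqref{eq:evol-norm}. Define $(\phi^{(0)},a^{(0)}) \equiv (\phi_0^\eps, a_0^\eps)$, and given $(\phi^{(n)},a^{(n)})$, let $(\phi^{(n+1)},a^{(n+1)})$ solve the decoupled linear system obtained by feeding $(\phi^{(n)},a^{(n)})$ into every nonlinear term of \eqref{eq:syst-grenier}. Each $\phi^{(n+1)}$ is then recovered by a time integral and each $a^{(n+1)}$ by solving a linear Schr\"odinger-type equation with the dispersive piece $\tfrac{i\eps}{2}D^2 a^{(n+1)}$ on the left. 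The existence of each iterate in the classes $Y^{\ell+1}_{w,T}$ and $Y^\ell_{w,T}$ is immediate at the linear level, since the dispersive operator is skew-adjoint on $\H^\ell_w$.

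The heart of the proof is then a uniform a priori estimate in $X^\ell_{w,T}$. Applying \eqref{eq:evol-norm} to $\phi^{(n+1)}$ with index $\ell+1$ and to $a^{(n+1)}$ with index $\ell$, the dispersive term for $a^{(n+1)}$ produces no contribution (it is purely imaginary), which is why the bound is uniform in $\eps\in[0,1]$. The $\dot w$-contribution yields $-2M\|\phi^{(n+1)}\|^2_{\H^{\ell+3/2}_w} - 2M\|a^{(n+1)}\|^2_{\H^{\ell+1/2}_w}$, precisely the \emph{parabolic gain} of half a derivative that will absorb all the losses. To control the source terms, I would use the analytic tame estimate
\[
\|fg\|_{\H^s_w} \lesssim \|f\|_{\H^s_w}\|g\|_{\H^{d/2+}_w},\qquad s\ge 0,
\]
which follows from $e^{w\<\xi\>}\le e^{w\<\xi-\eta\>}e^{w\<\eta\>}$ and the classical Sobolev product inequality; the restriction $\ell>(d+1)/2$ ensures $\ell+1/2>d/2$, so $\H^{\ell+1/2}_w$ is an algebra. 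The nonlocal piece is tamed via $\|K_j\ast h\|_{\H^s_w}\le \|\widehat{K_j}\|_{L^\infty}\|h\|_{\H^s_w}$. This lets me bound the Hamilton--Jacobi term by
\[
\bigl|(\phi^{(n+1)},\<\nabla\phi^{(n)},H\nabla\phi^{(n)}\>)_{\H^{\ell+1}_w}\bigr| \lesssim \|\phi^{(n+1)}\|_{\H^{\ell+3/2}_w}\|\phi^{(n)}\|^2_{\H^{\ell+3/2}_w},
\]
and similarly for the transport and $g(|a^{(n)}|^2)$ contributions in the $a$-equation. Using Young's inequality $ab\le \tfrac{1}{2M}a^2 +\tfrac{M}{2}b^2$, every such term is split into a piece swallowed by the parabolic gain on $\phi^{(n+1)}$ (or $a^{(n+1)}$) and a remainder controlled by a polynomial in $\vvvert\phi^{(n)}\vvvert_{\ell+1,t}$ and $\vvvert a^{(n)}\vvvert_{\ell,t}$. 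Integrating in time and using \eqref{eq:triple} yields a closed inequality of the form
\[
\vvvert\phi^{(n+1)}\vvvert_{\ell+1,T}^2 + \vvvert a^{(n+1)}\vvvert_{\ell,T}^2
\le C_0 + \frac{C}{M}\,P\bigl(\vvvert\phi^{(n)}\vvvert_{\ell+1,T},\vvvert a^{(n)}\vvvert_{\ell,T}\bigr),
\]
with $C_0$ depending on the initial data and $\|V\|_{L^2_{T_0}\H^{\ell+1/2}_{w_0}}$. Choosing $M$ large and $T<\min(w_0/M,T_0)$ small makes a ball in $X^\ell_{w,T}$ stable under the iteration, which gives the quantitative bounds in the statement.

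The next step is to show the sequence is Cauchy, which I would do at one level of regularity below, in $X^{\ell-1}_{w,T}$ (this is legitimate since $\ell-1>(d-1)/2$ still admits the same tame estimates). The equations for $(\phi^{(n+1)}-\phi^{(n)},a^{(n+1)}-a^{(n)})$ are linear in the differences with source terms quadratic in quantities already bounded uniformly in $X^\ell_{w,T}$; the same parabolic-gain mechanism, possibly with a further enlargement of $M$, gives contraction on a small interval. The limit lies in $X^\ell_{w,T}$ by weak-$\ast$ compactness and the uniform bound. Uniqueness is obtained identically, by applying the energy estimate to the difference of two solutions in $X^\ell_{w,T}$.

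The main obstacle is that the quasilinear term $\tfrac12\<\nabla\phi,H\nabla\phi\>$ is quadratic in a quantity that, at the $\H^{\ell+1}_w$ level for $\phi$, costs a full extra derivative; no symmetrization/hyperbolicity is available because $H$ has indefinite signature. The whole argument only closes because the factor $e^{w(t)\<\xi\>}$ in the norm, once differentiated in time, produces the negative semi-definite term $-2M\|\cdot\|^2_{\H^{\ell+3/2}_w}$ in \eqref{eq:evol-norm}, playing the role of a diffusion. Keeping careful track of the constants so that this gain genuinely dominates the loss for some finite $M$, uniformly along the iteration, is the delicate bookkeeping point of the proof.
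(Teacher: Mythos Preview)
Your strategy---Picard iteration combined with the parabolic half-derivative gain coming from the decaying analyticity weight in \eqref{eq:evol-norm}---is exactly the mechanism the paper uses. Two points, however, need correction, and one structural difference is worth noting.

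First, your displayed Hamilton--Jacobi estimate puts \emph{both} factors of $\phi^{(n)}$ in $\H^{\ell+3/2}_\w$. After Young's inequality and time integration this would leave a term $\tfrac{1}{M}\int_0^T\|\phi^{(n)}\|^4_{\H^{\ell+3/2}_\w}\,dt$, which the triple norm \eqref{eq:triple} does \emph{not} control (you only have $L^2_T\H^{\ell+3/2}_\w$, not $L^4_T$). You must actually apply the tame product estimate you quoted so that one factor sits in $\H^{\ell+1}_\w$ (hence $L^\infty_T$) and the other in $\H^{\ell+3/2}_\w$ (hence $L^2_T$); this is precisely the content of the paper's estimate \eqref{eq:mu1}, and it is what makes the recursion close.

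Second, your contraction step at level $\ell-1$ is both unnecessary and, as justified, incorrect. The claim ``$\ell-1>(d-1)/2$ still admits the same tame estimates'' fails: the product estimates for the $a$-equation (the analogues of \eqref{eq:nu1}--\eqref{eq:nu3}) require the low index to exceed $d/2$, so at level $\ell-1$ you would need $\ell-3/2>d/2$, i.e.\ $\ell>(d+3)/2$, which is stronger than the hypothesis. The paper instead runs the contraction at the \emph{same} level $\ell$: a pleasant feature of the analytic framework is that the difference equations enjoy the same $1/M$ gain as the original ones, so no derivative needs to be sacrificed and one obtains geometric convergence directly in $X^\ell_{\w,T}$.

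Finally, the paper's iteration is semi-implicit rather than fully explicit: the quadratic term is linearized as $\<\nabla\phi_j,H\nabla\phi_{j+1}\>$ (and similarly the transport terms in the $a$-equation are kept linear in $a_{j+1}$), so that each step is a genuine transport equation rather than a time integral. Both schemes work, but the semi-implicit one produces terms of the form $\tfrac{C}{M}\vvvert\phi_{j+1}\vvvert^2\vvvert\phi_j\vvvert$ and $\tfrac{C}{M}\vvvert a_{j+1}\vvvert^2\vvvert\phi_j\vvvert$ that are absorbed on the left, which is what delivers the clean constants $\vvvert a^\eps\vvvert_{\ell,T}^2\le 2\|a_0^\eps\|_{\H^\ell_{\w_0}}^2$ stated in the theorem, without residual $O(1/M)$ corrections.
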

An important aspect in the above statement is the fact that the local
existence time $T$ is uniform in $\eps\in [0,1]$. In view of the
discussion in Subsection~\ref{sec:setting}, this yields a uniform time
of existence for the solution of \eqref{eq:nls}. We emphasize that
this property is not a consequence of the standard local
well-posedness argument (based on a fixed point), which would yield a
local existence time $T^\eps=\O(\eps^\alpha)$ for some $\alpha\ge 1$,
while we recall that the a priori estimates do not make it possible to
extend the local solution to much larger time. In other words, the
formulation \eqref{eq:syst-grenier} is already helpful at the level
of the  life-span of the solution to \eqref{eq:nls}. 
\smallbreak

Our second result states the convergence of the phase and of the complex amplitude as $\eps\to 0$. 
\begin{theorem}\label{th:conv-obs}
Let $\w_0>0$, $\ell>(d+1)/2$, $T_0>0$, $V\in L^2_{T_0}\H_{w_0}^{\ell+3/2}$, 
$(\phi_0,a_0)\in\H_{\w_0}^{\ell+2}\times\H_{\w_0}^{\ell+1} $ and
$(\phi_0^\eps,a_0^\eps)_{\eps\in(0,1]}$ bounded in
$\H_{\w_0}^{\ell+1}\times\H_{\w_0}^{\ell}$ such that 
\begin{equation*}
  r_0^\eps:= \|\phi_0^\eps-
\phi_0\|_{\H_{\w_0}^{\ell+1}} + \|a_0^\eps-
a_0\|_{\H_{\w_0}^{\ell}} \Tend \eps 0 0.
\end{equation*}
Let $M=M(\ell+1)$ and $T=T(\ell+1)$, as defined as in Theorem~\ref{th:wp}. 
 Then there is an
 $\eps$-independent 
 $C>0$ such that for all $\eps\in (0,1]$, 
$$\vvvert\phi^\eps-\phi\vvvert_{\ell+1,T}+\vvvert
a^\eps-a\vvvert_{\ell,T}\le C\(r_0^\eps+\eps\),$$ 
where $(\phi^\eps,a^\eps)$ denotes the solution to
\eqref{eq:syst-grenier}  and
$(\phi,a)$ is the solution to the formal limit of \eqref{eq:syst-grenier} as $\eps\to 0$ 
\begin{equation}\label{eq:syst-limit}
  \left\{
    \begin{aligned}
      &\d_t \phi + \frac{1}{2}\<\nabla\phi, H\nabla
      \phi\>+g\(|a|^2\)\<\beta,\nabla\phi\>+\sum_{j=1}^J K_j
      \ast |a|^{2\sigma_j}+V=0  ,\\
      &\qquad\qquad \phi_{\mid t=0}=\phi_0,\\ 
& \d_t a + \<\nabla \phi , H\nabla a\>
+\frac{1}{2}a D^2\phi+\<\beta,\nabla\(g\(|a|^2\)a\)\>=
0,\\
&\qquad\qquad a_{\mid t=0}=a_0,
    \end{aligned}
\right.
\end{equation}
whose existence and uniqueness stem from Theorem~\ref{th:wp}.
\end{theorem}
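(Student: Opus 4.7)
The plan is to set up $L^2$-in-time energy estimates in analytic--Sobolev norms for the difference $(\delta\phi,\delta a):=(\phi^\eps-\phi,\,a^\eps-a)$, exploiting the parabolic smoothing hidden in the decreasing weight $\w(t)=\w_0-Mt$ through formula~\eqref{eq:evol-norm}. My first step is to invoke Theorem~\ref{th:wp} twice on a common interval $[0,T]$ with the choice $M=M(\ell+1)$ and $T=T(\ell+1)$: once at regularity level $\ell$ for the $\eps$-problem (only boundedness of $(\phi_0^\eps,a_0^\eps)$ in $\H_{\w_0}^{\ell+1}\times\H_{\w_0}^\ell$ is assumed), producing a bound $\|(\phi^\eps,a^\eps)\|_{X_{\w,T}^\ell}\lesssim 1$ uniform in $\eps$; and once at the higher level $\ell+1$ for \eqref{eq:syst-limit}, for which the stronger hypotheses on $V$ and on $(\phi_0,a_0)$ are tailored, producing a fixed bound $\|(\phi,a)\|_{X_{\w,T}^{\ell+1}}\lesssim 1$. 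Enlarging $M$ if necessary is harmless, since the final $(M,T)$ pair merely has to serve both applications.

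Subtracting \eqref{eq:syst-limit} from \eqref{eq:syst-grenier} yields an evolution system for $(\delta\phi,\delta a)$ whose right-hand sides break up into: (i) bilinear ``difference~$\times$~reference'' terms such as $\tfrac12\<\nabla(\phi^\eps+\phi),H\nabla\delta\phi\>$, $\<\beta,\nabla\phi\>(g(|a^\eps|^2)-g(|a|^2))$, and $\tfrac12 a^\eps D^2\delta\phi+\tfrac12\delta a\,D^2\phi$; (ii) the nonlocal pieces $\sum_j K_j\ast(|a^\eps|^{2\sigma_j}-|a|^{2\sigma_j})$, harmless because $\widehat{K_j}\in L^\infty$ makes convolution with $K_j$ a bounded operator on every $\H_\w^s$; and (iii) the single $\eps$-source $\tfrac{i\eps}{2}D^2 a^\eps$ in the $a$-equation. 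Taking the $\H_\w^{\ell+1}$-inner product of the $\delta\phi$-equation with $\delta\phi$, and the $\H_\w^\ell$-inner product of the $\delta a$-equation with $\delta a$, formula~\eqref{eq:evol-norm} delivers the dissipative contributions $-2M\bigl(\|\delta\phi\|_{\H_\w^{\ell+3/2}}^2+\|\delta a\|_{\H_\w^{\ell+1/2}}^2\bigr)$. The bilinear terms of type~(i) are then handled by tame product estimates in $\H_\w^s$ (which is an algebra for $s>d/2$, uniformly in $\w\ge 0$), redistributing derivatives so that the extra half always lands on the factor measured in the strong norm; Young's inequality absorbs each such contribution into the dissipative terms at the cost of enlarging $M$, exactly as in the proof of Theorem~\ref{th:wp}.

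The $\eps$-source is handled by writing $a^\eps=a+\delta a$ and exploiting self-adjointness of $D^2$ on $\H_\w^\ell$: since $H$ is real-symmetric, the Fourier symbol $-\<\xi,H\xi\>$ is real, so $\RE(\delta a,iD^2\delta a)_{\H_\w^\ell}=0$, and only the cross piece survives, estimated by
\[
\eps\,\bigl|\RE(\delta a,\tfrac{i}{2}D^2 a)_{\H_\w^\ell}\bigr|
\le \tfrac{M}{2}\|\delta a\|_{\H_\w^{\ell+1/2}}^2+\frac{C\eps^2}{M}\|a\|_{\H_\w^{\ell+3/2}}^2,
\]
whose last factor is integrable in time thanks to $a\in L^2_T\H_\w^{\ell+3/2}$, delivering an $\O(\eps^2)$ contribution. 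A Gronwall argument applied to $\Phi(t):=\vvvert\delta\phi\vvvert_{\ell+1,t}^2+\vvvert\delta a\vvvert_{\ell,t}^2$, which satisfies an inequality of the form $\Phi(t)\le C(r_0^\eps)^2+C\eps^2+C\int_0^t\Phi(s)\,ds$, then closes the argument and yields the announced estimate.

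The principal obstacle is the quasilinear character of \eqref{eq:syst-grenier}: the raw bilinear difference terms lose a full derivative, while the parabolic gain only recovers half a derivative, so each product must be carefully organised, with the extra half-derivative systematically parked on the factor measured in the strong norm. It is this bookkeeping that forces the reference solution $(\phi,a)$ to live one regularity level above the difference, explaining why the initial data and potential hypotheses in Theorem~\ref{th:conv-obs} are stated at level $\ell+1$ rather than $\ell$; the self-adjointness trick for the source is likewise indispensable, since otherwise a uniform control of $\eps D^2 a^\eps$ would be required, which the hypotheses do not provide.
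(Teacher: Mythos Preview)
Your proposal is correct and follows essentially the same route as the paper: derive the difference system for $(\delta\phi,\delta a)$, exploit the parabolic gain from the decaying analytic weight together with the tame product estimates (which the paper packages as Lemma~\ref{lem:evol-norm}), handle the $\eps$-source by splitting $D^2 a^\eps=D^2\delta a+D^2 a$ and using that the symbol of $D^2$ is real, and close via the large-$M$ absorption afforded by the uniform bounds from Theorem~\ref{th:wp}. The paper closes directly on the triple-norm inequalities (adding them and absorbing the $\tfrac{k}{M}$ cross terms) rather than via Gronwall, so once you have carried out the $1/M$ absorption your final $\int_0^t\Phi(s)\,ds$ step is superfluous, but this is a cosmetic rather than a substantive difference.
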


However,  regarding convergence of the wave function $u^\eps$, the previous
result is not sufficient. Indeed, as fast as the initial data $\phi_0^\eps$ and
$a_0^\eps$ may converge as $\eps\to 0$, Theorem \ref{th:conv-obs} at most
guarantees that $\phi^\eps-\phi=\mathcal{O}(\eps)$, which only ensures
that $a^\eps e^{i\phi^\eps/\eps}-ae^{i\phi/\eps}=\mathcal{O}(1)$, due
to the rapid oscillations. However, the above convergence result
suffices to infer the convergence of quadratic observables:
\begin{corollary}\label{cor:quad}
  Under the assumptions of Theorem~\ref{th:conv-obs}, the position and
  momentum densities converge:
  \begin{equation*}
    |u^\eps|^2\Tend \eps 0 |a|^2,\quad \text{and}\quad\IM \(\eps \bar
    u^\eps \d 
    u^\eps\) \Tend \eps 0 |a|^2\d \phi,\quad\text{in
    }L^\infty([0,T];L^1\cap L^\infty(\E^d)),
  \end{equation*}
where $\d=\d_j$, for any $j\in\{1,\cdots,d\}$.
\end{corollary}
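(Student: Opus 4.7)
The proof unpacks the polar representation $u^\eps = a^\eps e^{i\phi^\eps/\eps}$ and invokes Theorem~\ref{th:conv-obs}. A direct computation yields $|u^\eps|^2 = |a^\eps|^2$ and
\[
\IM\bigl(\eps\, \overline{u^\eps}\, \d u^\eps\bigr) = |a^\eps|^2\, \d\phi^\eps + \eps\, \IM\bigl(\overline{a^\eps}\, \d a^\eps\bigr),
\]
so the corollary reduces to three convergences in $L^\infty_T(L^1 \cap L^\infty)$: (a) $|a^\eps|^2 \to |a|^2$, (b) $|a^\eps|^2\, \d\phi^\eps \to |a|^2\, \d\phi$, and (c) the remainder $\eps\, \overline{a^\eps}\, \d a^\eps \to 0$.

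Theorem~\ref{th:conv-obs} supplies, uniformly in $t \in [0,T]$, the convergence $\|a^\eps(t) - a(t)\|_{\H_{\w(t)}^\ell} + \|\d\phi^\eps(t) - \d\phi(t)\|_{\H_{\w(t)}^\ell} = O(r_0^\eps + \eps)$, together with uniform-in-$\eps$ boundedness of $a^\eps$, $a$, $\d\phi^\eps$, $\d\phi$ in $L^\infty_T \H_\w^\ell$. Since $\ell > d/2$, the Sobolev embedding $\H_\w^\ell \hookrightarrow L^\infty$, combined with the trivial inclusion $\H_\w^\ell \subset L^2$, upgrades this to convergence and boundedness in $L^\infty_T (L^2 \cap L^\infty)$. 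Items (a) and (b) are then immediate from telescoping expansions such as
\[
|a^\eps|^2\, \d\phi^\eps - |a|^2\, \d\phi = (a^\eps - a)\,\overline{a^\eps}\, \d\phi^\eps + a\, \overline{(a^\eps - a)}\, \d\phi^\eps + |a|^2(\d\phi^\eps - \d\phi),
\]
bounding each factor pair by H\"older (pure $L^\infty$ for the spatial uniform norm, and Cauchy--Schwarz in $L^2 \times L^2$ on a pair of factors for the spatial $L^1$ norm).

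For the remainder (c), the $L^1$ part is immediate from Cauchy--Schwarz combined with the a priori bound $\vvvert a^\eps \vvvert_{\ell,T} \le C$ of Theorem~\ref{th:wp}, giving $\|\eps\, \overline{a^\eps}\, \d a^\eps\|_{L^\infty_T L^1} \lesssim \eps\, \|a^\eps\|_{L^\infty_T L^2}\|\d a^\eps\|_{L^\infty_T L^2} = O(\eps)$. The $L^\infty$ part is the main technical hurdle: pointwise in $t$, the regularity $a^\eps(t) \in \H_\w^\ell$ only places $\d a^\eps(t) \in \H_\w^{\ell-1}$, which embeds into $L^\infty$ under the condition $\ell > d/2 + 1$, stronger than the standing hypothesis $\ell > (d+1)/2$. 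I would bridge this half-derivative gap by exploiting the integrated-in-time regularity encoded in $Y_{\w,T}^\ell$: from $a^\eps \in L^2_T \H_\w^{\ell+1/2}$ one derives $\d a^\eps \in L^2_T \H_\w^{\ell-1/2} \hookrightarrow L^2_T L^\infty$ (valid since $\ell - 1/2 > d/2$), yielding an $O(\eps)$ control of the remainder in $L^2_T L^\infty$; combined with a Moser product estimate $\|fg\|_{\H_\w^{\ell-1}} \lesssim \|f\|_{\H_\w^\ell}\|g\|_{\H_\w^{\ell-1}}$ (valid for $\ell > d/2$) and interpolation with the uniform pointwise-in-time bound, this delivers the required convergence in $L^\infty_T L^\infty$.
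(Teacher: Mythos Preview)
Your treatment of (a), (b), and the $L^1$ part of (c) is correct and matches the paper's approach: polar decomposition, telescoping, Cauchy--Schwarz, and the embedding $\H_\w^\ell\hookrightarrow H^\ell\hookrightarrow L^\infty$. The paper dispatches the $L^\infty$ estimates with the single sentence ``replace $L^1$ and $L^2$ by $L^\infty$ in the above inequalities, and using Sobolev embedding again'', so up to that point the two proofs coincide.

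Where your argument has a genuine gap is in the $L^\infty_T L^\infty$ control of the remainder $\eps\,\overline{a^\eps}\,\d a^\eps$. You correctly observe that the naive route $\H_\w^{\ell-1}\subset H^{\ell-1}\subset L^\infty$ would require $\ell>d/2+1$, and you correctly extract $O(\eps)$ control in $L^2_T L^\infty$ from the $L^2_T\H_\w^{\ell+1/2}$ part of the $Y^\ell_{\w,T}$ norm. But the concluding ``interpolation with the uniform pointwise-in-time bound'' does not deliver what you claim: interpolation between $L^\infty_T H^{\ell-1}$ and $L^2_T H^{\ell-1/2}$ yields only $L^p_T H^{\ell-1+\theta/2}$ with $1/p=\theta/2$, so one never reaches $p=\infty$ at a spatial index exceeding $d/2$. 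That step is not salvageable as written.

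The clean fix --- which you overlook and which the paper arguably takes for granted --- is the strict positivity of the analyticity radius on $[0,T]$. Since $T<\w_0/M$, one has $\w(t)\ge \w(T)>0$ for all $t\in[0,T]$, and therefore
\[
\|\psi\|_{H^s}\le \Bigl(\sup_{\xi}\<\xi\>^{s-\ell}e^{-\w(T)\<\xi\>}\Bigr)\|\psi\|_{\H_{\w(t)}^\ell}
\]
for \emph{every} $s\ge 0$, with a constant independent of $t\in[0,T]$. In particular $a^\eps\in L^\infty_T H^s$ for all $s$, uniformly in $\eps$ by Theorem~\ref{th:wp}, so $\d a^\eps\in L^\infty_T L^\infty$ directly and the remainder is $O(\eps)$ in $L^\infty_T L^\infty$ with no interpolation needed.
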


 In
order to get a good approximation of the wave function $a^\eps
e^{i\phi^\eps/\eps}$, we have to approximate $\phi^\eps$ up to an
 error which is small compared to $\eps$. It will be done by adding a
 corrective 
term to $(\phi,a)$. For this purpose, we consider the system
obtained by linearizing \eqref{eq:syst-grenier} about
$(\phi,a)$, solution to \eqref{eq:syst-limit},
\begin{equation}\label{eq:syst-grenier-linearized}
  \left\{
    \begin{aligned}
      &\d_t \phi_1 + \<\nabla \phi, H\nabla\phi_1\>+g\(|a|^2\)\<\beta,\nabla \phi_1\>+ 
        2g'\(|a|^2\)\<\beta,\nabla\phi\>\RE\left(\overline{a}a_1\right)\\
&\qquad
+2\sum_{j=1}^J\sigma _j K_j \ast
\(|a|^{2(\sigma_j-1)}\RE\left(\overline{a}a_1\right)\right)=0 
,\\
&\qquad \qquad
      {\phi_1}_{\mid t=0}=\phi_{10},\\ 
& \d_t a_1  + \<\nabla \phi, H\nabla a_1\>+\frac{1}{2}a_1 D^2 \phi +
\<\nabla a, H\nabla \phi_1  \>
+\frac{1}{2}a D^2 \phi_1  \\
&\qquad+\<\beta,\nabla\( g\(|a|^2\)a_1\)\>+2\<\beta,\nabla\(a g'\(|a|^2\)\RE\left(\overline{a}a_1\right)\)\>
 =
 \frac{i}{2} D^2 a,\\
 &\qquad \qquad{a_1}_{\mid t=0}=a_{10}.
    \end{aligned}\right.
    \end{equation}

Provided $(\phi_0,a_0)\in \H_{\w_0}^{\ell+3}\times\H_{\w_0}^{\ell+2}$
(which implies $(\phi,a)\in X_{\w,T}^{\ell+2}$ according to
Theorem \ref{th:wp}) and $(\phi_{10},a_{10})\in
\H_{\w_0}^{\ell+2}\times\H_{\w_0}^{\ell+1}$, we will see that the solution to
(\ref{eq:syst-grenier-linearized}) belongs to $X_{\w,T}^{\ell+1}$, and
our final result is the following. 
 
\begin{theorem}
\label{th:conv-wf}
Let $\w_0>0$, $\ell>(d+1)/2$, $T_0>0$, $V\in L^2_{T_0}\H_{w_0}^{\ell+5/2}$,
$(\phi_0,a_0)\in\H_{\w_0}^{\ell+3}\times\H_{\w_0}^{\ell+2}$,
$(\phi_{10},a_{10})\in\H_{\w_0}^{\ell+2}\times\H_{\w_0}^{\ell+1} $ and
$(\phi_0^\eps,a_0^\eps)_{\eps\in(0,1]}$ bounded in
$\H_{\w_0}^{\ell+1}\times\H_{\w_0}^{\ell}$ such that 
\begin{equation*}
  r_1^\eps:=\|\phi_0^\eps- \phi_0-\eps\phi_{10}\|_{\H_{\w_0}^{\ell+1}}
  + \|a_0^\eps- a_0-\eps a_{10}\|_{\H_{\w_0}^{\ell}}=o(\eps)\text{ as
  } \eps \to 0.
\end{equation*}
Then, for $M=M(\ell+2)$ and $T=T(\ell+2)$  as in Theorem~\ref{th:wp},
there is 
an $\eps$-independent  $C>0$ such that for all $\eps\in (0,1]$, 
\begin{equation}\label{eq:est2}
\vvvert\phi^\eps-\phi-\eps \phi_1\vvvert_{\ell+1,T}+\vvvert
a^\eps-a-\eps a_1\vvvert_{\ell,T}\le C\(r_1^\eps +\eps^2\),
\end{equation}
where $(\phi^\eps,a^\eps)$ denotes the solution to
\eqref{eq:syst-grenier}, $(\phi,a)$
is the solution to \eqref{eq:syst-limit}, and $(\phi_1,a_1)$ is the solution to
\eqref{eq:syst-grenier-linearized}. In particular,
\begin{equation*}
  \left\|u^\eps - a
    e^{i\phi_1}e^{i\phi/\eps}\right\|_{L^\infty([0,T];L^2\cap
    L^\infty(\E^d))}=\O\(\frac{r_1^\eps}{\eps}+\eps\)\Tend \eps 0 0 . 
\end{equation*}
\end{theorem}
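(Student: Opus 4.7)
The plan is to set $R^\eps:=\phi^\eps-\phi-\eps\phi_1$ and $r^\eps:=a^\eps-a-\eps a_1$, whose initial data satisfy $\|R^\eps_{|t=0}\|_{\H_{\w_0}^{\ell+1}}+\|r^\eps_{|t=0}\|_{\H_{\w_0}^\ell}=r_1^\eps=o(\eps)$. Subtracting \eqref{eq:syst-limit} and $\eps$ times \eqref{eq:syst-grenier-linearized} from \eqref{eq:syst-grenier}, I would Taylor expand every nonlinear expression involved ($\<\nabla\phi^\eps,H\nabla\phi^\eps\>$, $g(|a^\eps|^2)\<\beta,\nabla\phi^\eps\>$, $K_j\ast|a^\eps|^{2\sigma_j}$, $a^\eps D^2\phi^\eps$ and $\<\beta,\nabla[g(|a^\eps|^2)a^\eps]\>$) around $(\phi,a)$ with increment $(\eps\phi_1+R^\eps,\eps a_1+r^\eps)$, writing each real‐valued modulus $|a+\eps a_1+r^\eps|^{2m}=|a|^{2m}+2m\eps|a|^{2(m-1)}\RE(\bar a a_1)+\ldots$ By construction, the terms of order $\eps^0$ cancel thanks to \eqref{eq:syst-limit}, and those of order $\eps^1$ cancel thanks to \eqref{eq:syst-grenier-linearized}; the residual system then reads schematically
\begin{equation*}
\left\{
\begin{aligned}
&\d_t R^\eps+\L_\phi(R^\eps,r^\eps)=\eps^2\,\mathcal{N}_\phi+\mathcal{Q}_\phi(R^\eps,r^\eps;\eps),\\
&\d_t r^\eps+\L_a(R^\eps,r^\eps)-\tfrac{i\eps}{2}D^2 r^\eps=\eps^2\,\mathcal{N}_a+\mathcal{Q}_a(R^\eps,r^\eps;\eps),
\end{aligned}\right.
\end{equation*}
where $\L_\phi,\L_a$ are precisely the linear operators obtained by linearizing \eqref{eq:syst-grenier} around $(\phi+\eps\phi_1,a+\eps a_1)$, where $\mathcal{N}_\phi,\mathcal{N}_a$ involve only the known functions $(\phi,a,\phi_1,a_1)$ and are bounded in $L^2_T\H_\w^{\ell+1/2}\times L^2_T\H_\w^{\ell+1/2}$ uniformly in $\eps$ (thanks to $(\phi,a)\in X_{\w,T}^{\ell+2}$ and $(\phi_1,a_1)\in X_{\w,T}^{\ell+1}$, which follow from Theorem~\ref{th:wp} applied at regularity $\ell+2$ and $\ell+1$), and where the quadratic remainders $\mathcal{Q}_\phi,\mathcal{Q}_a$ satisfy pointwise bounds of the form $O(\eps|R^\eps|+\eps|r^\eps|+|R^\eps|^2+|r^\eps|^2)$.

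Next, I would rerun the energy estimate underlying Theorem~\ref{th:wp} on this new system, now for the pair $(R^\eps,r^\eps)\in X_{\w,T}^\ell$: using \eqref{eq:evol-norm} and the parabolic gain $2\dot\w=-2M$, and exploiting the algebra structure of $\H_\w^\ell$ for $\ell>d/2$ to estimate the products and convolutions appearing in $\L_\phi,\L_a$ by the already-controlled norms of $(\phi,a,\phi_1,a_1)$, one reaches an inequality
\begin{equation*}
\vvvert R^\eps\vvvert_{\ell+1,t}^2+\vvvert r^\eps\vvvert_{\ell,t}^2\le C\bigl((r_1^\eps)^2+\eps^4\bigr)+C\int_0^t\bigl(\vvvert R^\eps\vvvert_{\ell+1,s}^2+\vvvert r^\eps\vvvert_{\ell,s}^2\bigr)ds,
\end{equation*}
once $M=M(\ell+2)$ has been fixed large enough to absorb the linear source coming from $\L_\phi,\L_a$ and the semiclassical term $-\tfrac{i\eps}{2}D^2r^\eps$, and $T=T(\ell+2)$ small enough for the bootstrap on $\mathcal{Q}_\phi,\mathcal{Q}_a$ to close. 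Gronwall's lemma then yields the estimate \eqref{eq:est2}.

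To conclude the wave function statement, I would write
\begin{equation*}
u^\eps-a\,e^{i\phi_1}e^{i\phi/\eps}=e^{i\phi/\eps}e^{i\phi_1}\Bigl[(a^\eps-a)\,e^{iR^\eps/\eps}+a\bigl(e^{iR^\eps/\eps}-1\bigr)\Bigr],
\end{equation*}
and use the elementary estimate $|e^{iR^\eps/\eps}-1|\le|R^\eps|/\eps$ together with the embedding $\H_\w^\ell\hookrightarrow L^2\cap L^\infty$ (valid since $\ell>d/2$) to bound the $L^\infty_T(L^2\cap L^\infty)$ norm of the right-hand side by $\|a^\eps-a\|_{L^\infty_T\H_\w^\ell}+\|a\|_{L^\infty_T\H_\w^\ell}\,\|R^\eps\|_{L^\infty_T\H_\w^{\ell+1}}/\eps=O(\eps)+O((r_1^\eps+\eps^2)/\eps)=O(r_1^\eps/\eps+\eps)$.

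The main obstacle I anticipate lies in the first paragraph: algebraically verifying that the $O(\eps)$ contribution produced by Taylor expanding each nonlinearity in $(\eps\phi_1+R^\eps,\eps a_1+r^\eps)$ agrees \emph{exactly} with the terms appearing in \eqref{eq:syst-grenier-linearized}. This requires careful treatment of $g(|\cdot|^2)$ and $|\cdot|^{2\sigma_j}$ as functions of the real and imaginary parts of their argument (they are not holomorphic), and particular care must be taken with the convolution pieces $K_j\ast|a^\eps|^{2\sigma_j}$ and the products with $\<\beta,\nabla\cdot\>$, since these are precisely the terms whose linearization produced the coupled nonlocal contributions in \eqref{eq:syst-grenier-linearized}. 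Once this algebra is carried out cleanly, the remaining analytic energy estimate is a direct analogue of the one already used in Theorems~\ref{th:wp} and~\ref{th:conv-obs}.
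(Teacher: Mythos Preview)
Your overall strategy---Taylor expand the nonlinear terms around $(\phi,a)$, cancel the zeroth and first order contributions using \eqref{eq:syst-limit} and \eqref{eq:syst-grenier-linearized}, then estimate the residual $(R^\eps,r^\eps)$---is exactly the paper's, and your treatment of the wave-function convergence at the end is essentially identical. Two points of divergence deserve comment.

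First, the Gr\"onwall step you write does not fit this framework. The energy identity \eqref{eq:evol-norm} combined with Lemma~\ref{lem:evol-norm} does not produce an inequality of the form $\vvvert R^\eps\vvvert_{\ell+1,t}^2\le C+C\int_0^t\vvvert R^\eps\vvvert_{\ell+1,s}^2\,ds$; rather, it yields a static inequality of the shape
\[
\vvvert R^\eps\vvvert_{\ell+1,T}^2\le \|R^\eps(0)\|_{\H_{\w_0}^{\ell+1}}^2+\frac{C}{M}\vvvert R^\eps\vvvert_{\ell+1,T}\bigl[\cdots\bigr],
\]
where the bracket contains all source and coupling terms. The linear terms in $(R^\eps,r^\eps)$ are then absorbed not by Gr\"onwall but by taking $M$ large enough so that $C/M$ is small, exactly as in the proof of Theorem~\ref{th:wp}. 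Your mention of ``$M$ large enough to absorb'' is right; the subsequent appeal to Gr\"onwall is superfluous and structurally mismatched.

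Second, and more substantively, the paper handles the quadratic remainders $\mathcal Q_\phi,\mathcal Q_a$ differently from your proposed bootstrap. Rather than expanding around $(\phi+\eps\phi_1,a+\eps a_1)$ and bootstrapping on $\vvvert R^\eps\vvvert^2$, the paper keeps the quadratic pieces written in terms of the \emph{first-order} differences $\delta\phi^\eps=\phi^\eps-\phi$ and $\delta a^\eps=a^\eps-a$ (e.g.\ $\tfrac12\<\nabla\delta\phi^\eps,H\nabla\delta\phi^\eps\>$), and then invokes Theorem~\ref{th:conv-obs} to bound $\vvvert\delta\phi^\eps\vvvert_{\ell+1,T},\vvvert\delta a^\eps\vvvert_{\ell,T}=\O(\eps)$, so that these terms contribute $\O(\eps^2)$ directly. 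This bypasses the continuity/bootstrap argument entirely and is the reason Theorem~\ref{th:conv-obs} is a genuine ingredient in the proof rather than just an analogue. Your approach via bootstrap on $(R^\eps,r^\eps)$ can be made to work (the cubic-type terms $\frac{C}{M}\vvvert R^\eps\vvvert^3$ close for small data by continuity), but it is more delicate and you should either carry it out or, more simply, follow the paper and use the first-order result.
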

\subsection*{Outline} In Section~\ref{sec:cauchy}, we prove Theorem~\ref{th:wp}, by starting
with a generalization of key estimates established in \cite{GV01} to
the periodic setting $\E^d=\T^d$. Theorem~\ref{th:conv-obs} is
proved in Section~\ref{sec:first}, and the proof of
Theorem~\ref{th:conv-wf} is sketched in the final
Section~\ref{sec:cv}.

\section{Well-posedness}\label{sec:cauchy}
\subsection{A key bilinear estimate}
The following proposition is proved in \cite{GV01} in the case
$\E^d=\R^d$ in the context of long range scattering. We have used it
in \cite{CaGa17,CaGa17b} in the context of semi-classical analysis. We
extend it here to the case $\E^d=\T^d$.

\begin{proposition}\label{lem:tame} Let $\ell\ge 0$
%. Then,\\
%$1.$ For  $k+s >m+d/2+2$, and $k,s\ge m+1$,
%\begin{equation*}
%  \| \nabla\phi\cdot \nabla a\|_{\H_\w^m}\le C
%   \|\phi\|_{\H_\w^{s}} \|a\|_{\H_\w^k}. 
%\end{equation*} 
%$2.$ For $k+s >m+2+d/2$, $k\ge m$ and  $s\ge m+2$, 
%  \begin{equation*}
%  \|  a\Delta \phi\|_{\H_\w^m}\le C \|\phi\|_{\H_\w^{s}} \|a\|_{\H_\w^k}.
%\end{equation*}
%$3.$ For 
and $s>d/2$. Then, %there is a constant $C>0$, independent of
%$\w\ge 0$, such that 
for every $\psi_1,\psi_2\in \H_\w^{\max(\ell,s)}$,
\begin{equation}\label{eq:tame}
    \|\psi_1 \psi_2\|_{\H_\w^\ell}\le C_{\ell,s}\(
    \|\psi_1\|_{\H_\w^\ell}\|\psi_2\|_{\H_\w^s} +
    \|\psi_1\|_{\H_\w^s}\|\psi_2\|_{\H_\w^\ell}\), 
  \end{equation}
  where 
  $$C_{\ell,s}=\left\{\begin{array}{ll}
   \frac{2^\ell}{(2\pi)^{d/2}}\left\|\frac{1}{\<\cdot\>^s}\right\|_{L^2(\R^d)} &{\rm if}\quad \E^d=\R^d,\\
  \frac{2^\ell}{(2\pi)^{d/2}}\left\|\frac{1}{\<\cdot\>^s}\right\|_{\ell^2(\Z^d)} &{\rm if}\quad \E^d=\T^d.
  \end{array}\right.$$
\end{proposition}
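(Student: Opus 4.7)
The plan is to transfer everything to the Fourier side. A direct computation from the normalizations fixed in the paper gives, in both settings,
\[
\widehat{\psi_1\psi_2}=\tfrac{1}{(2\pi)^{d/2}}\,\widehat\psi_1\ast\widehat\psi_2,
\]
where $\ast$ denotes Lebesgue convolution on $\R^d$ in the first case and the convolution sum on $\Z^d$ in the second. The norm on $\H_\w^\ell$ is a weighted $L^2$ (resp.~$\ell^2$) norm of this Fourier side, and Young's inequality for convolutions and Cauchy--Schwarz hold identically with respect to Lebesgue or counting measure, so the two cases are literally the same proof modulo the replacement of $\int_{\R^d}d\xi$ by $\sum_{m\in\Z^d}$; I would write it only once.

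The engine of the argument is the pair of elementary pointwise bounds
\[
\langle\xi\rangle\le\langle\xi-\eta\rangle+\langle\eta\rangle\le 2\max\bigl(\langle\xi-\eta\rangle,\langle\eta\rangle\bigr),
\]
giving $\langle\xi\rangle^\ell\le 2^\ell\bigl(\langle\xi-\eta\rangle^\ell+\langle\eta\rangle^\ell\bigr)$ for every $\ell\ge 0$, together with the submultiplicativity $e^{\w\langle\xi\rangle}\le e^{\w\langle\xi-\eta\rangle}e^{\w\langle\eta\rangle}$ valid for $\w\ge 0$. Introducing the non-negative functions $F_\alpha(\xi)=\langle\xi\rangle^\alpha e^{\w\langle\xi\rangle}|\widehat\psi_1(\xi)|$ and similarly $G_\alpha$ for $\psi_2$ (whose $L^2$ norms are $\|\psi_1\|_{\H_\w^\alpha}$ and $\|\psi_2\|_{\H_\w^\alpha}$, respectively), these two bounds combine to yield the pointwise convolution estimate
\[
\langle\xi\rangle^\ell e^{\w\langle\xi\rangle}\bigl|\widehat{\psi_1\psi_2}(\xi)\bigr|\le\frac{2^\ell}{(2\pi)^{d/2}}\Bigl[\bigl(F_\ell\ast(\langle\cdot\rangle^{-s}G_s)\bigr)(\xi)+\bigl((\langle\cdot\rangle^{-s}F_s)\ast G_\ell\bigr)(\xi)\Bigr].
\]

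Taking the $L^2$ norm in $\xi$ and applying Young's inequality $\|A\ast B\|_{L^2}\le\|A\|_{L^2}\|B\|_{L^1}$ to each of the two pieces, followed by Cauchy--Schwarz to estimate $\|\langle\cdot\rangle^{-s}G_s\|_{L^1}\le\|\langle\cdot\rangle^{-s}\|_{L^2}\|G_s\|_{L^2}$ and its symmetric analogue, will close the estimate and produce exactly the constant $C_{\ell,s}$ announced. The hypothesis $s>d/2$ enters only to guarantee that $\|\langle\cdot\rangle^{-s}\|_{L^2(\R^d)}$ (resp.~$\|\langle\cdot\rangle^{-s}\|_{\ell^2(\Z^d)}$) is finite; this is the single place any dimensional restriction is used, and it is sharp. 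I do not anticipate any real obstacle, since the weighted Fourier definition of $\H_\w^\ell$ is tailored precisely for this convolution structure; the one piece of bookkeeping to verify is that the prefactor $(2\pi)^{-d/2}$ comes out identical in both geometries under the Fourier conventions fixed in the paper, which is immediate from Parseval/Plancherel.
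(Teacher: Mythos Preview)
Your proof is correct and follows essentially the same route as the paper: pass to the Fourier side, use the subadditivity of $\langle\cdot\rangle$ together with submultiplicativity of the exponential weight to bound the weighted transform of the product by a sum of two convolutions, then apply Young's inequality and Cauchy--Schwarz to extract the constant $C_{\ell,s}$. The only cosmetic difference is that the paper obtains the factor $2^\ell$ via the dichotomy ``either $\langle n\rangle\ge\langle m\rangle/2$ or $\langle m-n\rangle\ge\langle m\rangle/2$'' rather than your direct inequality $\langle\xi\rangle^\ell\le 2^\ell(\langle\xi-\eta\rangle^\ell+\langle\eta\rangle^\ell)$, but this leads to the same intermediate bound and the same final constant.
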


We detail the proof only in the case $\E^d=\T^d$. The proof is
analogous in the case $\E^d=\R^d$, and can be found in \cite{GV01}
(with different notations, though). Proposition~\ref{lem:tame} in the
case $\E^d=\T^d$ stems from the following sequence of lemmas. We skip
the proofs of the most classical ones. 

\begin{lemma} \label{ineg-tri} For all $m,n\in\R^d$, \quad $\<m+n\>\le \<m\>+\<n\>.$
\end{lemma}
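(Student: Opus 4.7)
The statement is the subadditivity of the Japanese bracket $\<\cdot\>=\sqrt{1+|\cdot|^2}$, a classical and elementary fact. My plan is to recognize that $\<m\>$ equals the Euclidean norm of $(m,1)\in\R^{d+1}$, so that the inequality can be read off directly from the triangle inequality in $\R^{d+1}$ applied to the decomposition $(m+n,1)=(m,1)+(n,0)$. This yields
\[
\<m+n\>=\bigl|(m+n,1)\bigr|_{\R^{d+1}}\le \bigl|(m,1)\bigr|_{\R^{d+1}}+\bigl|(n,0)\bigr|_{\R^{d+1}}=\<m\>+|n|\le\<m\>+\<n\>,
\]
the last inequality being immediate from $|n|^2\le 1+|n|^2$.

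If one prefers not to pass through an auxiliary Euclidean space, one can square both sides of the desired inequality: it suffices to show
\[
1+|m+n|^2\le 2+|m|^2+|n|^2+2\sqrt{(1+|m|^2)(1+|n|^2)}.
\]
Using the standard triangle inequality $|m+n|^2\le |m|^2+|n|^2+2|m||n|$ on the left, the claim reduces to $2|m||n|\le 1+2\sqrt{(1+|m|^2)(1+|n|^2)}$, which follows at once from $(1+|m|^2)(1+|n|^2)\ge |m|^2|n|^2$ together with $1\ge 0$.

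There is no real obstacle here; the statement is the kind of inequality one might use without comment, and it is stated explicitly only because it will be invoked repeatedly (with $m,n$ playing the roles of Fourier modes) in the sequence of lemmas culminating in the tame estimate of Proposition~\ref{lem:tame}. A minor bonus of the first approach is that it in fact delivers the slightly stronger bound $\<m+n\>\le \<m\>+|n|$, which is occasionally useful but not needed in what follows.
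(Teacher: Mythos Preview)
Your proof is correct; both the $\R^{d+1}$ embedding argument and the direct squaring argument are valid. The paper itself does not prove this lemma at all, explicitly stating that it skips the proofs of the most classical results in the sequence leading to Proposition~\ref{lem:tame}, so there is no approach to compare against.
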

%\begin{proof}
%$$\<m+n\>=|(1,m+n)|\le |(1,m)|+|(0,n)|\le |(1,m)|+|(1,n)|=\<m\>+\<n\>,$$
%where $|.|$ denotes here the euclidian norm in $\R^{d+1}$.
%\end{proof}

\begin{lemma} \label{convol} If $\psi_1,\psi_2\in L^2(\T^d)$, for all $m\in\Z^d$,
\begin{equation*}
\widehat{\psi_1\psi_2}(m)=\frac{1}{(2\pi)^{d/2}}\sum_{k\in\Z^d}\widehat{\psi_1}(k)\widehat{\psi_2}(m-k)=: \frac{1}{(2\pi)^{d/2}}\widehat{\psi_1}*\widehat{\psi_2}(m).
\end{equation*}
\end{lemma}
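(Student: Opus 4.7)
The plan is to derive the identity from Parseval's formula on $\T^d$ combined with a conjugation trick that turns the Fourier coefficient of the product $\psi_1\psi_2$ into an $L^2$ inner product of $\psi_1$ with a suitably chosen function built from $\psi_2$. Since $\psi_1,\psi_2\in L^2(\T^d)$, Cauchy--Schwarz gives $\psi_1\psi_2\in L^1(\T^d)$, so the left-hand side $\widehat{\psi_1\psi_2}(m)$ is a well-defined scalar for each $m\in\Z^d$, and by Plancherel the Fourier coefficients $\widehat{\psi_1},\widehat{\psi_2}$ lie in $\ell^2(\Z^d)$, hence the convolution sum on the right-hand side converges absolutely by Cauchy--Schwarz in $\ell^2$.

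Concretely, I would fix $m\in\Z^d$ and rewrite, using that $\psi_2 e^{-i\<m,\cdot\>}$ is in $L^2(\T^d)$,
\begin{equation*}
  \widehat{\psi_1\psi_2}(m)=\frac{1}{(2\pi)^{d/2}}\int_{\T^d}\psi_1(y)\,\overline{f(y)}\,dy=\frac{1}{(2\pi)^{d/2}}(\psi_1,f)_{L^2(\T^d)},
\end{equation*}
where $f(y):=\overline{\psi_2(y)}\,e^{i\<m,y\>}$. A direct computation of the Fourier coefficient of $f$ from the definition gives
\begin{equation*}
  \widehat{f}(k)=\frac{1}{(2\pi)^{d/2}}\int_{\T^d}\overline{\psi_2(y)}\,e^{-i\<k-m,y\>}dy=\overline{\widehat{\psi_2}(m-k)},
\end{equation*}
after recognizing the conjugate of the defining integral for $\widehat{\psi_2}(m-k)$. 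Applying Parseval on $\T^d$ then yields
\begin{equation*}
  (\psi_1,f)_{L^2(\T^d)}=\sum_{k\in\Z^d}\widehat{\psi_1}(k)\,\overline{\widehat{f}(k)}=\sum_{k\in\Z^d}\widehat{\psi_1}(k)\,\widehat{\psi_2}(m-k),
\end{equation*}
which is exactly the claimed formula after dividing by $(2\pi)^{d/2}$.

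No real obstacle is expected; the only point that deserves a line of justification is the absolute convergence of the Parseval series, which follows from the Cauchy--Schwarz inequality $\sum_k |\widehat{\psi_1}(k)\widehat{\psi_2}(m-k)|\le \|\widehat{\psi_1}\|_{\ell^2}\|\widehat{\psi_2}\|_{\ell^2}=\|\psi_1\|_{L^2}\|\psi_2\|_{L^2}$, using the translation invariance of the $\ell^2$ norm of $\widehat{\psi_2}$. Alternatively, one could bypass Parseval entirely by plugging the $L^2$-convergent Fourier series of $\psi_1$ into the defining integral for $\widehat{\psi_1\psi_2}(m)$, exchanging sum and integral (justified by dominated convergence using the Cauchy--Schwarz bound above against the $L^2$ function $\psi_2 e^{-i\<m,\cdot\>}$), and then reading off the Fourier coefficient of $\psi_2$ term by term; this gives the same identity and only invokes the orthogonality $\int_{\T^d}e^{i\<k-\ell,y\>}dy=(2\pi)^d\delta_{k,\ell}$.
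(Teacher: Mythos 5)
The paper explicitly skips the proof of this lemma (it is listed among ``the most classical ones''), so there is no in-paper argument to compare against. Your Parseval-based proof is correct and standard: the conjugation trick $f(y)=\overline{\psi_2(y)}e^{i\<m,y\>}$ cleanly identifies $\widehat{\psi_1\psi_2}(m)$ with $\frac{1}{(2\pi)^{d/2}}(\psi_1,f)_{L^2}$, the computation $\widehat{f}(k)=\overline{\widehat{\psi_2}(m-k)}$ is consistent with the paper's $(2\pi)^{-d/2}$ normalization, and Plancherel with that same normalization gives $\|\psi\|_{L^2}^2=\sum_m|\widehat\psi(m)|^2$, so the Cauchy--Schwarz bound on the series is correct as stated. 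The alternative route you sketch (inserting the $L^2$-convergent Fourier series of $\psi_1$ into the defining integral and using orthogonality) is equally valid and avoids naming Parseval. Either version is an adequate proof of the lemma.
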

%\begin{proof}
%\begin{align*}
%\widehat{\psi_1\psi_2}(m)&=\frac{1}{(2\pi)^{d/2}}\int_{\T^d}\(\frac{1}{(2\pi)^{d/2}}\sum_{m_1\in\Z^d}\widehat{\psi_1}(m_1)e^{im_1\cdot y}\)\(\frac{1}{(2\pi)^{d/2}}\sum_{m_2\in\Z^d}\widehat{\psi_2}(m_2)e^{im_2\cdot y}\)e^{-im\cdot y}dy\\
%&=\frac{1}{(2\pi)^{3d/2}}\sum_{m_1\in\Z^d}\sum_{m_2\in\Z^d}\widehat{\psi_1}(m_1)\widehat{\psi_2}(m_2)\underbrace{\int_{\T^d}e^{i(m_1+m_2-m)\cdot y}dy}_{\tiny =\left\{\begin{array}{ll}0 &{\rm if\ }\ m_1+m_2\neq m\\ (2\pi)^d &{\rm if\ }\  m_1+m_2=m\end{array}\right. }
%\end{align*}
%\end{proof}

\begin{lemma}
\label{bi}
For $\ell,w\ge 0$, if $\psi_1,\psi_2\in \H_w^\ell$, then
\begin{align*}
\|\psi_1\psi_2\|_{\H_w^\ell}&\le \frac{2^\ell}{(2\pi)^{d/2}}\left[\left\|\(\<\cdot\>^\ell e^{w\<\cdot\>}|\widehat{\psi_1}|\)*\(e^{w\<\cdot\>}|\widehat{\psi_2}|\)\right\|_{\ell^2(\Z^d)}\right.\\
&\qquad\qquad\qquad+\left.\left\|\( e^{w\<\cdot\>}|\widehat{\psi_1}|\)*\(\<\cdot\>^\ell e^{w\<\cdot\>}|\widehat{\psi_2}|\)\right\|_{\ell^2(\Z^d)}\right]
\end{align*}
\end{lemma}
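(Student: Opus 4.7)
The plan is to expand the $\H_w^\ell$-norm of the product $\psi_1\psi_2$ directly from the definition, insert the convolution formula of Lemma \ref{convol} in place of $\widehat{\psi_1\psi_2}(m)$, and then push the weight $\<m\>^\ell e^{w\<m\>}$ inside the summation over $k$ using the sub-additivity statement of Lemma \ref{ineg-tri}.

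Concretely, I would start with
\begin{equation*}
\<m\>^\ell e^{w\<m\>}|\widehat{\psi_1\psi_2}(m)| \le \frac{1}{(2\pi)^{d/2}} \sum_{k\in\Z^d} \<m\>^\ell e^{w\<m\>}|\widehat{\psi_1}(k)||\widehat{\psi_2}(m-k)|.
\end{equation*}
From Lemma \ref{ineg-tri}, $\<m\>\le \<k\>+\<m-k\>$, so on the one hand $e^{w\<m\>}\le e^{w\<k\>}e^{w\<m-k\>}$ (this uses $w\ge 0$), and on the other hand, since $a,b\ge 0$ imply $(a+b)^\ell\le (2\max(a,b))^\ell\le 2^\ell(a^\ell+b^\ell)$ for every $\ell\ge 0$, one has $\<m\>^\ell\le 2^\ell\bigl(\<k\>^\ell+\<m-k\>^\ell\bigr)$. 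Distributing the two terms yields the pointwise bound
\begin{equation*}
\<m\>^\ell e^{w\<m\>}|\widehat{\psi_1\psi_2}(m)| \le \frac{2^\ell}{(2\pi)^{d/2}}\Bigl[F_1*G_2(m)+G_1*F_2(m)\Bigr],
\end{equation*}
where $F_j=\<\cdot\>^\ell e^{w\<\cdot\>}|\widehat{\psi_j}|$ and $G_j=e^{w\<\cdot\>}|\widehat{\psi_j}|$ (on $\Z^d$).

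It then remains to take the $\ell^2(\Z^d)$-norm in $m$ on both sides: by definition of the $\H_w^\ell$-norm the left-hand side becomes exactly $\|\psi_1\psi_2\|_{\H_w^\ell}$, and the triangle inequality in $\ell^2(\Z^d)$ applied to the right-hand side produces the two convolution norms appearing in the statement, with the prefactor $2^\ell/(2\pi)^{d/2}$. I do not expect any genuine obstacle here: the argument is a clean translation of the classical Sobolev-product trick to the weighted setting, the only mildly delicate point being the choice of the constant $(a+b)^\ell\le 2^\ell(a^\ell+b^\ell)$ rather than a sharper convexity bound, which is needed to cover uniformly all $\ell\ge 0$ (including $\ell\in[0,1)$) and exactly matches the constant announced in the lemma.
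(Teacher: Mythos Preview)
Your proof is correct and follows essentially the same route as the paper: use Lemma~\ref{convol} to write the Fourier coefficient of the product as a convolution, push the exponential weight inside via Lemma~\ref{ineg-tri}, distribute the polynomial weight $\<m\>^\ell$ onto the two factors with a $2^\ell$ constant, and conclude by the $\ell^2$ triangle inequality. The only cosmetic difference is how $\<m\>^\ell$ is split: the paper uses the dichotomy ``either $\<n\>\ge\<m\>/2$ or $\<m-n\>\ge\<m\>/2$'' and treats the two sub-sums separately, whereas you use the equivalent one-line inequality $(a+b)^\ell\le 2^\ell(a^\ell+b^\ell)$ and distribute directly---both yield the same constant and the same final bound.
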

\begin{proof}
From Lemma \ref{convol},
\begin{align*}
\|\psi_1\psi_2\|_{\H_w^\ell}^2 & \le \frac{1}{(2\pi)^{d}}\sum_{m\in\Z^d}\(\sum_{n\in\Z^d}\<m\>^\ell e^{w\<m\>}|\widehat{\psi_1}(n)||\widehat{\psi_2}(m-n)|\)^2.
\end{align*}
From Lemma \ref{ineg-tri} and because for any $m,n\in\Z^d$, we have either $\<n\>\ge \<m\>/2$ or $\<m-n\>\ge \<m\>/2$, we deduce
\begin{align*}
\|\psi_1\psi_2\|_{\H_w^\ell}^2 & \le \frac{1}{(2\pi)^{d}}\sum_{m\in\Z^d}\(\sum_{n\in\Z^d,\<n\>\ge \<m\>/2}\<m\>^\ell e^{w\<n\>}|\widehat{\psi_1}(n)|e^{w\<m-n\>}|\widehat{\psi_2}(m-n)|\right.\\
&\qquad\qquad\qquad\left.+\sum_{n\in\Z^d,\<m-n\>\ge \<m\>/2}\<m\>^\ell e^{w\<n\>}|\widehat{\psi_1}(n)|e^{w\<m-n\>}|\widehat{\psi_2}(m-n)|\)^2\\
& \le \frac{2^{2\ell}}{(2\pi)^{d}}\sum_{m\in\Z^d}\(\sum_{n\in\Z^d}\<n\>^\ell e^{w\<n\>}|\widehat{\psi_1}(n)|e^{w\<m-n\>}|\widehat{\psi_2}(m-n)|\right.\\
&\qquad\qquad\qquad\left.+\sum_{n\in\Z^d}e^{w\<n\>}|\widehat{\psi_1}(n)|\<m-n\>^\ell e^{w\<m-n\>}|\widehat{\psi_2}(m-n)|\)^2.
\end{align*}
The result follows thanks to the triangle inequality in $\ell^2(\Z^d)$.
\end{proof}

\begin{lemma}
\label{young}
If $u\in \ell^2(\Z^d)$ and $v\in\ell^1(\Z^d)$, then $u*v\in \ell^2(\Z^d)$, and
\begin{align*}
\|u*v\|_{\ell^2(\Z^d)}\le \|u\|_{\ell^2(\Z^d)}\|v\|_{\ell^1(\Z^d)}
\end{align*}
\end{lemma}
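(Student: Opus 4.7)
The plan is to prove the stated inequality by the classical Cauchy--Schwarz trick that splits the absolute value of $v$ symmetrically between the two factors in the convolution sum. Write
$$ (u*v)(m)=\sum_{n\in\Z^d} u(n)\,v(m-n),$$
and estimate
$$ |(u*v)(m)|\le \sum_{n\in\Z^d}\bigl(|u(n)|^2|v(m-n)|\bigr)^{1/2}\bigl(|v(m-n)|\bigr)^{1/2}.$$
First I would apply Cauchy--Schwarz in the $n$ variable to the right-hand side, which gives
$$ |(u*v)(m)|^2\le \Bigl(\sum_{n}|u(n)|^2|v(m-n)|\Bigr)\Bigl(\sum_{n}|v(m-n)|\Bigr)=\|v\|_{\ell^1(\Z^d)}\sum_{n}|u(n)|^2|v(m-n)|.$$

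Next I would sum over $m\in\Z^d$ and swap the two sums by Fubini--Tonelli (legitimate since all terms are non-negative). The inner sum $\sum_m|v(m-n)|$ is independent of $n$ by translation invariance of counting measure and equals $\|v\|_{\ell^1(\Z^d)}$, so
$$ \|u*v\|_{\ell^2(\Z^d)}^2\le \|v\|_{\ell^1(\Z^d)}\sum_{n}|u(n)|^2\sum_{m}|v(m-n)|=\|v\|_{\ell^1(\Z^d)}^2\|u\|_{\ell^2(\Z^d)}^2.$$
Taking square roots yields the claimed bound; in particular, finiteness of the right-hand side gives $u*v\in\ell^2(\Z^d)$, so the summation step just carried out is justified a posteriori.

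There is no real obstacle here, as this is just Young's convolution inequality specialized to $p=2$, $q=1$, $r=2$ on the discrete group $\Z^d$; the only mild point to double-check is that the use of Fubini and the re-indexation $m\mapsto m-n$ are valid on the non-negative integrand, which they are. The proof above is completely parallel to the one on $\R^d$ used in \cite{GV01}, with the counting measure replacing Lebesgue measure.
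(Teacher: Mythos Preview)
Your proof is correct; it is the standard Cauchy--Schwarz splitting argument for Young's inequality on a discrete group. The paper does not actually give a proof of this lemma: it explicitly announces that it ``skip[s] the proofs of the most classical ones,'' and Lemma~\ref{young} is among those stated without proof. So there is nothing to compare against, and your argument fills the gap cleanly.
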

%\begin{proof}
%The Cauchy-Schwarz inequality yields
%\begin{align*}
%\sum_{m\in\Z^d}\left|\sum_{n\in\Z^d}u(n)v(m-n)\right|^2&\le \sum_{m\in\Z^d}\sum_{n_1\in\Z^d}|u(n_1)|^2|v(m-n_1)|\sum_{n_2\in\Z^d}|v(m-n_2)|\\
%&=\|u\|_{\ell^2(\Z^d)}^2\|v\|_{\ell^1(\Z^d)}^2
%\end{align*}
%\end{proof}

\begin{proof}[Proof of Proposition \ref{lem:tame}]
Let us estimate the first term in the bracket of the right hand side of the inequality in Lemma \ref{bi}. The other term is treated similarly. According to Lemma \ref{young}, we have
\begin{align*}
&\left\|\(\<\cdot\>^\ell e^{w\<\cdot\>}|\widehat{\psi_1}|\)*\(e^{w\<\cdot\>}|\widehat{\psi_2}|\)\right\|_{\ell^2(\Z^d)}\\
&\qquad\le \left\|\<\cdot\>^\ell e^{w\<\cdot\>}|\widehat{\psi_1}|\right\|_{\ell^2(\Z^d)}\left\|e^{w\<\cdot\>}|\widehat{\psi_2}|\right\|_{\ell^1(\Z^d)}\\
&\qquad\le \left\|\<\cdot\>^\ell e^{w\<\cdot\>}|\widehat{\psi_1}|\right\|_{\ell^2(\Z^d)}\left\|\frac{1}{\<\cdot\>^s}\right\|_{\ell^2(\Z^d)}\left\|\<\cdot\>^s e^{w\<\cdot\>}|\widehat{\psi_2}|\right\|_{\ell^2(\Z^d)}\\
&\qquad\le\left\|\frac{1}{\<\cdot\>^s}\right\|_{\ell^2(\Z^d)}\|\psi_1\|_{\H_w^\ell}\|\psi_2\|_{\H_w^s},
\end{align*}
where we have also used the Cauchy-Schwarz inequality.
\end{proof}

%$u\in L^2\(\T^d\)$ can be decomposed into Fourier series as
%\begin{equation*}
%u(x)=\frac{1}{(2\pi)^{d/2}}\sum_{m\in\Z^d}\widehat{u}(m)e^{im\cdot x},\qquad {\rm where }\qquad\widehat{u}(m)=\frac{1}{(2\pi)^{d/2}}\int_{\T^d}u(y)e^{-im\cdot y}dy.
% \end{equation*}

\subsection{The iterative scheme}
In this section,  $\eps\in [0,1]$ is fixed. To lighten the notations,
we consider the case $J=1$ (only one Fourier multiplier), and leave
out the corresponding index: the proof shows that considering
finitely many such terms is straightforward. Solutions to (\ref{eq:syst-grenier}) are
constructed  as limits of the solutions of the iterative scheme  
\begin{equation}\label{eq:iteration}
  \left\{
    \begin{aligned}
      &\d_t \phi^\eps_{j+1} + \frac{1}{2}\<\nabla\phi^\eps_j,
      H\nabla\phi^\eps_{j+1}\>+g(|a_j^\eps|^2)\<\beta,\nabla
      \phi^\eps_{j+1}\>=-K\ast |a_j^\eps|^{2\sigma}-V,\\
      &\qquad \qquad\phi^\eps_{j+1\mid t=0}=\phi_0^\eps,\\
& \d_t a^\eps_{j+1} + \<\nabla\phi^\eps_j, H\nabla a^\eps_{j+1}\>
+\frac{1}{2}(D^2 \phi^\eps_j) a^\eps_{j+1}\\
&\qquad+\<\beta,\nabla\(g(|a_j^\eps|^2)\)\>a^\eps_{j+1}+h(|a_j^\eps|^2)\overline{a_j^\eps}\<\beta,\nabla a_j^\eps\> a^\eps_{j+1}= 
\frac{i\eps}{2}D^2 a^\eps_{j+1},\\ 
& \qquad \qquad a^\eps_{j+1\mid t=0}=a_0^\eps,
\end{aligned}
\right.
\end{equation}
where $h(s)=g(s)/s$.
 The scheme is initialized with the time-independent pair
$(\phi_0^\eps,a_0^\eps)\in
\H_{\w_0}^{\ell+1}\times\H_{\w_0}^{\ell}\subset X_{\w,T}^\ell$ for any
$T>0$. 
\smallbreak

The scheme is well-defined: if $\ell>(d+1)/2$, for a given $(\phi_j^\eps,a_j^\eps)\in X_{\w,T}^\ell$, (\ref{eq:iteration}) defines $(\phi_{j+1}^\eps,a_{j+1}^\eps)$.
Indeed, in the first equation, $\phi_{j+1}^\eps$
solves a linear transport equation with smooth coefficients, which
guarantees the existence of a solution $\phi_{j+1}^\eps\in
L^\infty_TL^2$ (see e.g \cite[Section 3]{BCD11},  or \cite[Section
  II.C, Proposition~1.2]{AlGe07}). The same argument provides a solution
$a_{j+1}^\eps\in L^\infty_TL^2$ to the second equation in the case
$\eps=0$.  On the other hand, if $\eps>0$, the  
second equation is equivalent through the relation $v_{j+1}^\eps = a_{j+1}^\eps
e^{i\phi_j^\eps/\eps}$ to the equation
\begin{align}\label{eq:linear-schrodinger}
& i\eps\d_t v_{j+1}^\eps +\frac{\eps^2}{2}D^2 v_{j+1}^\eps +W(t,x)v_{j+1}^\eps=0,
\end{align}
with initial condition
$$\quad v_{j+1\mid t=0}^\eps= v_0^\eps=a_0^\eps e^{i\phi_0^\eps/\eps},$$
where 
$$W=\d_t \phi_j^\eps +\frac{1}{2}\<\nabla
  \phi_j^\eps, H\nabla\phi_j^\eps\>+i\eps\<\beta,\nabla
  \(g\(|a_j^\eps|^2\)\)\>
  +i\eps h(|a^\eps_j|^2)\overline{a_j^\eps}\<\beta,\nabla
  a_j^\eps\>.$$ 
This is a linear Schr\"odinger like equation, with a second order
operator $D^2$ which is not necessarily elliptic, and with a 
smooth and bounded external time-dependent potential $W(t,x)$. Note that
this external potential is complex-valued, so the existence of a
solution for \eqref{eq:linear-schrodinger} is not quite standard.
On the other hand, a fixed
point argument applied to the map 
\[
  \Psi :\left\{
  \begin{array}{rcl}C([0,T^\eps],L^2)&\longrightarrow &C([0,T^\eps],L^2)\\u &\mapsto &e^{i\eps t D^2/2}v_0^\eps+\frac{i}{\eps}\int_0^t e^{i\eps (t-s) D^2/2}\left[W(s)u(s)\right]ds,
  \end{array}
\right.
\]
where $0<T^\eps\le T$, provides the existence of a $C([0,T^\eps],L^2)$
solution to (\ref{eq:linear-schrodinger}), for some $T^\eps>0$. We
actually have $v_{j+1}^\eps\in C([0,T],L^2)$, since $W\in
L^\infty([0,T]\times\E^d)$. 
\smallbreak

The following lemma gives the estimates that will ensure that
$(\phi_{j+1}^\eps,a_{j+1}^\eps)\in X_{\w,T}^\ell$ provided
$(\phi_j^\eps,a_j^\eps)\in X_{\w,T}^\ell$. It is almost identical to
Lemma~2.2 in \cite{CaGa17b}. 

\begin{lemma}\label{lem:evol-norm}
 Let  $\ell>(d+1)/2$ and $T>0$. Let $(\phi,a)\in X_{\w,T}^\ell$,
 $\tilde{a}\in Y_{\w,T}^{\ell+1}$ and $(\FF,\GG)\in
 L^2([0,T],\H_\w^{\ell+1/2}\times \H_\w^{\ell-1/2})$ such that 
\begin{align}
&\d_t\phi=\FF,& \phi(0)\in\H_{\w_0}^{\ell+1},\label{eq:phi}\\
&\d_t a=\GG+i\theta_1D^2 a+i\theta_2D^2\tilde{a},& a(0)\in\H_{\w_0}^{\ell},\label{eq:a}
\end{align}    
where $\theta_1,\theta_2\in\R$. Then
\begin{align}
&\vvvert\phi\vvvert_{\ell+1,T}^2  \le
  \|\phi(0)\|_{\H_{\w_0}^{\ell+1}}^2+\frac{1}{M}\vvvert\phi\vvvert_{\ell+1,T}\sqrt{2M}\|\FF\|_{L^2_T\H_{\w}^{\ell+1/2}},\label{eq:phiY}\\ 
&\vvvert a\vvvert_{\ell,T}^2  \le
  \|a(0)\|_{\H_{\w_0}^{\ell}}^2+\frac{1}{M}\vvvert
  a\vvvert_{\ell,T}\sqrt{2M}\|\GG\|_{L^2_T\H_{\w}^{\ell-1/2}}+
\frac{|\theta_2|}{2M}\vvvert 
  a\vvvert_{\ell,T}\vvvert \tilde{a}\vvvert_{\ell+1,T}.\label{eq:aY} 
\end{align}
Moreover, there exists $C>0$ (that depends only on $\ell$, not on $w$) such that
\begin{itemize}
\item If $\FF=\d_j\psi_1 \d_k\psi_2$ with $\psi_1,\psi_2\in
  Y_{\ell+1,T}$, then 
\begin{equation}\label{eq:mu1} 
\sqrt{2M}\|\FF\|_{L^2_T\H_{\w}^{\ell+1/2}}\le C\vvvert
\psi_1\vvvert_{\ell+1,T}\vvvert \psi_2\vvvert_{\ell+1,T} .
\end{equation}
\item If
  $\FF=\left(\overset{2n}{\underset{j=1}{\prod}}b_j\right)\d_k\psi$
  with $n\ge 1$, $\psi\in Y_{\ell+1,T}$ and $b_j\in Y_{\ell,T}$ for all $j$,
  then 
\begin{equation}\label{eq:mu2}
\sqrt{2M}\|\FF\|_{L^2_T\H_{\w}^{\ell+1/2}}\le C\left(\overset{2n}{\underset{j=1}{\prod}} \vvvert b_j\vvvert_{\ell,T}\right) \vvvert\psi\vvvert_{\ell+1,T}.
\end{equation}
\item If $\FF=K*\left(\overset{2n}{\underset{j=1}{\prod}} b_j\right)$ with $n\ge 1$, $b_j\in Y_{\ell,T}$ for all $j$ and $\widehat{K}$ uniformly bounded, then
\begin{equation}\label{eq:mu3}
\sqrt{2M}\|\FF\|_{L^2_T\H_{\w}^{\ell+1/2}}\le C\left(\overset{2n}{\underset{j=1}{\prod}} \vvvert b_j\vvvert_{\ell,T}\right).
\end{equation}
\item If $\GG=\d_j\psi \d_k b$ with $\psi\in Y_{\ell+1,T}$ and $b\in Y_{\ell,T}$, then
\begin{equation}\label{eq:nu1}
\sqrt{2M}\|\GG\|_{L^2_T\H_{\w}^{\ell-1/2}}\le C\vvvert \psi\vvvert_{\ell+1,T}\vvvert b\vvvert_{\ell,T}.
\end{equation}
\item If $\GG=b D^2\psi $ with $\psi\in Y_{\ell+1,T}$ and $b\in Y_{\ell,T}$, then
\begin{equation}\label{eq:nu2}
\sqrt{2M}\|\GG\|_{L^2_T\H_{\w}^{\ell-1/2}}\le C\vvvert \psi\vvvert_{\ell+1,T}\vvvert b\vvvert_{\ell,T}.
\end{equation}
\item If $\GG=\left(\overset{2n}{\underset{j=1}{\prod}}b_j\right)\d_k b$ with $n\ge 1$, $b, b_j\in Y_{\ell,T}$ for all $j$, then
\begin{equation}\label{eq:nu3}
\sqrt{2M}\|\GG\|_{L^2_T\H_{\w}^{\ell-1/2}}\le C\left(\overset{2n}{\underset{j=1}{\prod}} \vvvert b_j\vvvert_{\ell,T}\right) \vvvert b\vvvert_{\ell,T}.
\end{equation}

\end{itemize}
\end{lemma}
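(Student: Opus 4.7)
The plan is to derive \eqref{eq:phiY}--\eqref{eq:aY} from the basic energy identity \eqref{eq:evol-norm}, which with $\dot\w=-M$ takes the parabolic form
\[
  \frac{d}{dt}\|\psi\|_{\H_\w^\ell}^2
  =2\RE(\psi,\d_t\psi)_{\H_\w^\ell}-2M\|\psi\|_{\H_\w^{\ell+1/2}}^2.
\]
Integrating from $0$ to $t$ and observing that $\vvvert\psi\vvvert_{\ell,t}^2$ is exactly the maximum of the two resulting left-hand quantities, all the energy estimates reduce to bounding $2\int_0^t\RE(\psi,\d_t\psi)_{\H_\w^\ell}\,ds$ through the duality $\H_\w^{\ell+1/2}\times\H_\w^{\ell-1/2}$ followed by Cauchy--Schwarz in time, which produces the canonical prefactor $\|\psi\|_{L^2_T\H_\w^{\ell+1/2}}\le\vvvert\psi\vvvert_{\ell,T}/\sqrt{2M}$.

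Applying this recipe to $\d_t\phi=\FF$ at level $\ell+1$ immediately yields \eqref{eq:phiY}. For \eqref{eq:aY} the key algebraic observation is that $(a,i\theta_1 D^2 a)_{\H_\w^\ell}$ has vanishing real part: since $H$ is real symmetric, the Fourier multiplier $-\langle\xi,H\xi\rangle$ of $D^2$ is real-valued, so $i\theta_1 D^2$ is skew-adjoint on $\H_\w^\ell$ \emph{regardless of the ellipticity of $H$}. The cross term is controlled by $|(a,D^2\tilde a)_{\H_\w^\ell}|\lesssim\|a\|_{\H_\w^{\ell+1/2}}\|\tilde a\|_{\H_\w^{\ell+3/2}}$; time integration combined with $\|\tilde a\|_{L^2_T\H_\w^{\ell+3/2}}\le\vvvert\tilde a\vvvert_{\ell+1,T}/\sqrt{2M}$ produces exactly the $|\theta_2|/(2M)$ coefficient. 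The $\GG$ contribution is handled like $\FF$ but at level $\ell$ instead of $\ell+1$.

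The six multilinear bounds \eqref{eq:mu1}--\eqref{eq:nu3} rest on two ingredients: Proposition~\ref{lem:tame} applied with $s=\ell$ (legitimate since $\ell>(d+1)/2>d/2$), which makes $\H_\w^\ell$ a Banach algebra and, by induction, yields the symmetrized product rule
\[
  \|b_1\cdots b_{2n}\|_{\H_\w^{\ell+1/2}}
  \le C\sum_{k=1}^{2n}\|b_k\|_{\H_\w^{\ell+1/2}}\prod_{j\ne k}\|b_j\|_{\H_\w^\ell};
\]
and the trivial Fourier bound $\|K*u\|_{\H_\w^r}\le\|\widehat K\|_{L^\infty}\|u\|_{\H_\w^r}$. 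In each case one factor (the one carrying the derivative, or an arbitrary one in \eqref{eq:mu3}) is placed in $L^2_T\H_\w^{\ell+1/2}$, where it absorbs the $\sqrt{2M}$ prefactor through $\|\cdot\|_{L^2_T\H_\w^{\ell+1/2}}\le\vvvert\cdot\vvvert_{\ell,T}/\sqrt{2M}$, while the remaining factors sit in $L^\infty_T\H_\w^\ell$ and are dominated by $\vvvert\cdot\vvvert_{\ell,T}$. The half-derivative cost of $\d_k\psi$ in \eqref{eq:mu1}, \eqref{eq:mu2}, \eqref{eq:nu1}, \eqref{eq:nu3} is exactly absorbed by $\psi\in Y_{\w,T}^{\ell+1}$; the same occurs for $D^2\psi$ in \eqref{eq:nu2}.

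The main bookkeeping obstacle is \eqref{eq:mu3}: one must first invoke $\widehat K\in L^\infty$ to remove the convolution, then apply Proposition~\ref{lem:tame} \emph{iteratively} to distribute the single $(\ell+1/2)$-derivative among the $2n$ factors, and finally check that the factor placed in $L^2_T$ produces a full $\vvvert b_j\vvvert_{\ell,T}$ with no leftover $\sqrt{2M}$. Once this template is fixed, the remaining cases follow by straightforward variants, exactly as in \cite[Lemma~2.2]{CaGa17b}; the genuinely new input is the $\T^d$-version of Proposition~\ref{lem:tame} established above and the appearance of the Fourier multiplier $K$.
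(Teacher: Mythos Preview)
Your approach is the same as the paper's: energy identity \eqref{eq:evol-norm} for \eqref{eq:phiY}--\eqref{eq:aY}, skew-adjointness of $i\theta_1 D^2$ via the real symbol $-\langle\xi,H\xi\rangle$, and repeated use of Proposition~\ref{lem:tame} together with the Fourier-multiplier bound for $K$ for \eqref{eq:mu1}--\eqref{eq:nu3}.

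There is one genuine slip. You claim that Proposition~\ref{lem:tame} is applied throughout with $s=\ell$, and that this is ``legitimate since $\ell>(d+1)/2>d/2$''. For the $\mu$-estimates \eqref{eq:mu1}--\eqref{eq:mu3} (norm at level $\ell+1/2$) this is fine and indeed only needs $\ell>d/2$. But for the $\nu$-estimates \eqref{eq:nu1}--\eqref{eq:nu3} the product is measured in $\H_\w^{\ell-1/2}$, and the tame inequality with $s=\ell$ then produces a term like $\|\d_k b\|_{\H_\w^{\ell}}\lesssim\|b\|_{\H_\w^{\ell+1}}$, which is \emph{not} controlled by $\vvvert b\vvvert_{\ell,T}$ (neither in $L^\infty_T$ nor in $L^2_T$). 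Your scheme ``place the differentiated factor in $L^2_T\H_\w^{\ell+1/2}$'' therefore does not close for \eqref{eq:nu1}--\eqref{eq:nu3} as written.

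The fix is exactly what the paper does: for \eqref{eq:nu1}--\eqref{eq:nu3} apply Proposition~\ref{lem:tame} with $m=s=\ell-1/2$, i.e.\ use the algebra property of $\H_\w^{\ell-1/2}$. This requires $\ell-1/2>d/2$, and is precisely the place where the hypothesis $\ell>(d+1)/2$ (rather than merely $\ell>d/2$) is actually used. With that adjustment, your ``place $b$ in $L^2_T\H_\w^{\ell+1/2}$ so that $\d_k b\in L^2_T\H_\w^{\ell-1/2}$'' argument goes through and yields \eqref{eq:nu1}--\eqref{eq:nu3}. Everything else in your outline matches the paper's proof.
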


\begin{proof}
The proof of (\ref{eq:phiY}) and (\ref{eq:aY}) is identical to the one
given in \cite{CaGa17b}. The new constraint $\ell>(d+1)/2$ plays no
role here. Inequalities similar to (\ref{eq:mu1})-(\ref{eq:nu3}) were
proved in \cite{CaGa17b}. The only differences with \cite{CaGa17b} are
the presence of the kernel $K$ in (\ref{eq:mu3}) and the constraint on
$s$ in Proposition \ref{eq:tame}, which is $s>d/2$ whereas it was
$s>1/2$ in \cite{CaGa17b} (where we had $d=1$). It is actually sufficient to assume $\ell>d/2$  for the proof of (\ref{eq:mu1})-(\ref{eq:mu3}), as we use several times (\ref{eq:tame}) with $m=\ell+1/2$ and $s=\ell>d/2$, or with $m=s=\ell$.
For instance (\ref{eq:mu3}) follows from
\begin{align*}
\left\|K*\left(\overset{2n}{\underset{j=1}{\prod}}
  b_j\right)\right\|_{\H_{\w}^{\ell+1/2}}&\le
                                           \|\widehat{K}\|_{L^\infty}
                                           \left\|\overset{2n}{\underset{j=1}{\prod}} 
                                           b_j\right\|_{\H_{\w}^{\ell+1/2}}\\  
&\le C\sum_{j=1}^{2n}\prod_{k=1}^{j-1}\|b_k\|_{\H_{\w}^{\ell}}\|b_j\|_{\H_{\w}^{\ell+1/2}}\prod_{k=j+1}^{2n}\|b_k\|_{\H_{\w}^{\ell}}.
\end{align*}
In order to prove (\ref{eq:nu1})-(\ref{eq:nu3}), we use (\ref{eq:tame}) with $m=s=\ell-1/2>d/2$ which is possible thanks to the assumption $\ell>(d+1)/2$.
Actually, even (\ref{eq:nu1})-(\ref{eq:nu3}) can be proved under the condition $\ell>d/2$, thanks to a
refined version of Lemma \ref{lem:tame} (see \cite{CaGa17}). However,
since it is not useful in the sequel to sharpen this assumption, we
choose to make the stronger assumption $\ell>(d+1)/2$ for the sake of conciseness. 
\end{proof}

\noindent {\bf First step: boundedness of the sequence.}
%\begin{proof}[Proof of Theorem~\ref{th:wp}]
In view of the equation satisfied by $\phi_{j+1}^\eps$ in
(\ref{eq:iteration}), Lemma~\ref{lem:evol-norm} yields
\begin{align*}
  \vvvert  \phi_{j+1}^\eps\vvvert_{\ell+1,T}^2 &\le \|
  \phi_0^\eps\|_{\H_{\w_0}^{\ell+1}}^2 +\frac{C}{M} 
\vvvert \phi_{j+1}^\eps\vvvert_{\ell+1,T}^2 
\vvvert \phi_{j}^\eps\vvvert_{\ell+1,T}
+\frac{C}{M} 
\vvvert \phi_{j+1}^\eps\vvvert_{\ell+1,T}
\vvvert a_{j}^\eps\vvvert_{\ell,T}^{2\si}\\
&\quad +\frac{C}{M} \vvvert \phi_{j+1}^\eps\vvvert_{\ell+1,T}^2\vvvert a_{j}^\eps\vvvert_{\ell,T}^{2\gamma}+\frac{C}{M} \vvvert \phi_{j+1}^\eps\vvvert_{\ell+1,T}\sqrt{2M}\| V\|_{L^2_T\H_w^{\ell+1/2}}.
\end{align*}
As for $a_{j+1}^\eps$, we obtain in a similar way
\begin{align*}
  \vvvert  a_{j+1}^\eps\vvvert_{\ell,T}^2 &\le \|
 a_0^\eps\|_{\H_{\w_0}^\ell}^2 +\frac{C}{M} 
\vvvert a_{j+1}^\eps\vvvert_{\ell,T}^2 
\vvvert \phi_{j}^\eps\vvvert_{\ell+1,T}+\frac{C}{M} 
\vvvert a_{j+1}^\eps\vvvert_{\ell,T}^2 
\vvvert a_{j}^\eps\vvvert_{\ell,T}^{2\gamma}.
\end{align*}
Up to the term with $V$ in the first one, the last two estimates are exactly the ones we had in \cite{CaGa17b}. The proof of the boundedness of the sequence $(\phi_j^\eps,a_j^\eps)$ in $X_{w,T}^\ell$ is quite similar to what was done in \cite{CaGa17b}. Indeed, under the assumption 
\begin{equation}\label{eq:rec}
\frac{C}{M}\vvvert\phi_j^\eps\vvvert_{\ell+1,T}\le\frac{1}{4},\qquad \frac{C}{M}\vvvert a_j^\eps\vvvert_{\ell,T}^{2\gamma}\le\frac{1}{4},
\end{equation}
we have
\begin{align}\label{eq-hyp-rec-j+1}
\frac{1}{4}\vvvert\phi_{j+1}^\eps\vvvert_{\ell+1,T}^2
  &\le\|\phi_0^\eps\|_{\H_{\w_0}^{\ell+1}}^2+\frac{2C^2}{M^2}\vvvert
    a_j^\eps\vvvert_{\ell,T}^{4\sigma}+\frac{4C^2}{M}\|V\|_{L^2_T\H_w^{\ell+1/2}}^2 
\end{align}
and
\begin{align}\label{eq-hyp-rec-j+1-bis}
\frac{1}{2}\vvvert a_{j+1}^\eps\vvvert_{\ell,T}^2 & \le\|a_0^\eps\|_{\H_{\w_0}^{\ell}}^2.
\end{align}
Note that the monotonicity property \eqref{eq:monotone} implies
\begin{equation*}
  \|V\|_{L^2_T\H_w^{\ell+1/2}} \le \|V\|_{L^2_{T_0}\H_{w_0}^{\ell+1/2}}.
\end{equation*}
We next show by induction that, provided $M$ is sufficiently
large, we can construct a sequence $(\phi_j^\eps, a_j^\eps)_{j\in\N}$
such that for every $j\in\N$,  
\begin{align}\label{eq:hyp-rec}
\vvvert\phi_{j}^\eps\vvvert_{\ell+1,T}^2
  &\le4\|\phi_0^\eps\|_{\H_{\w_0}^{\ell+1}}^2+\frac{8C^2}{M^2}\(2\|a_0^\eps\|_{\H_{\w_0}^{\ell}}^2\)^{2\sigma}+\frac{16C^2}{M}\|V\|_{L^2_{T_0}\H_{w_0}^{\ell+1/2}}^2, 
\end{align}
\begin{align}\label{eq:hyp-rec-bis}
\vvvert a_{j}^\eps\vvvert_{\ell,T}^2 &\le 2\|a_0^\eps\|_{\H_{\w_0}^{\ell}}^2.
\end{align}

For that purpose, we choose $M$ sufficiently large such that
(\ref{eq:rec}) holds for $j=0$ and such that  
\begin{align}\label{eq:cond-M}
4\|\phi_0^\eps\|_{\H_{\w_0}^{\ell+1}}^2+\frac{8C^2}{M^2}\(2\|a_0^\eps\|_{\H_{\w_0}^{\ell}}^2\)^{2\sigma}+\frac{16C^2}{M}\|V\|_{L^2_{T_0}\H_{w_0}^{\ell+1/2}}^2\le \frac{M^2}{16C^2},
\end{align}
and
\begin{align}\label{eq:cond-M-bis}
(2\|a_0^\eps\|_{\H_{\w_0}^{\ell}}^2)^\gamma\le \frac{M}{4C}.
\end{align}
Then, (\ref{eq:hyp-rec})-(\ref{eq:hyp-rec-bis}) hold for $j=0$, since with
$(\phi_0^\eps,a_0^\eps)(t,x)=(\phi_0^\eps,a_0^\eps)(x)$ independent of
time,  it is
easy to check that $\vvvert
\phi_0^\eps\vvvert_{\ell+1,T}= \|\phi_0^\eps\|_{\H_{\w_0}^{\ell+1}}$
and $\vvvert
a_0^\eps\vvvert_{\ell,T}=\|a_0^\eps\|_{\H_{\w_0}^{\ell}}$. Let $j\ge
0$ and assume that (\ref{eq:hyp-rec})-(\ref{eq:hyp-rec-bis}) hold. Then (\ref{eq:hyp-rec})-(\ref{eq:hyp-rec-bis})
and (\ref{eq:cond-M})-(\ref{eq:cond-M-bis}) ensure that the condition (\ref{eq:rec}) is
satisfied, and therefore (\ref{eq-hyp-rec-j+1})-(\ref{eq-hyp-rec-j+1-bis}) hold, from which we
infer easily that (\ref{eq:hyp-rec})-(\ref{eq:hyp-rec-bis}) are true for $j$ replaced by
$j+1$ (for the estimate on the norm  of $V$, we use here the fact that $T\le T_0$ and $w\le w_0$).
\smallbreak

\noindent {\bf Second step: convergence.}
For $j\ge 1$, we set
$\delta\phi_j^\eps=\phi_j^\eps-\phi_{j-1}^\eps$, and 
$\delta a_j^\eps=a_j^\eps-a_{j-1}^\eps$. Then, for every $j\ge
1$, we have 
\begin{align*}
  &\d_t \delta\phi_{j+1}^\eps+ \frac{1}{2}\(\<\nabla \phi^\eps_j,
     H \nabla \delta \phi^\eps_{j+1}  \>+ \<\nabla \delta \phi^\eps_j,
      H\nabla\phi^\eps_{j}\>\)+g\(|a_j^\eps|^2\)\<\beta,\nabla\delta\phi_{j+1}^\eps\>\\
      &\qquad+\(g\(|a_j^\eps|^2\)-g\(|a_{j-1}^\eps|^2\)\)\<\beta,\nabla\phi^\eps_{j} \>+K*\(|a_j^\eps|^{2\sigma}-|a_{j-1}^\eps|^{2\sigma}\)=0.
\end{align*}
and
\begin{align*}
&  \d_t \delta a_{j+1}^\eps + \<\nabla \phi_j^\eps, H\nabla \delta
  a_{j+1}^\eps \>+ \<\nabla \delta \phi_j^\eps , H\nabla a_j^\eps \>+
  \frac{1}{2}\delta a_{j+1}^\eps D^2 \phi_j^\eps +
  \frac{1}{2}a_j^\eps D^2 \delta \phi_j^\eps \\
&\quad  +\<\beta,\nabla \(g\(|a_j^\eps|^2\)\)\>\delta a^\eps_{j+1}+
  \<\beta,\nabla\(g\(|a_j^\eps|^2\)-g\(|a_{j-1}^\eps|^2\)\)\>a_j^\eps\\
 &\quad  +h\(|a^\eps_j|^2\)\<\beta,\nabla a_j^\eps\>\overline{a_j^\eps} \delta a^\eps_{j+1}
 + h\(|a^\eps_j|^2\)\<\beta,\nabla a_j^\eps\>\overline{\delta a_j^\eps}  a^\eps_{j}\\
&\quad + h\(|a^\eps_j|^2\)\<\beta,\nabla \delta a_j^\eps\>\overline{ a_{j-1}^\eps} a^\eps_{j}
 + \(h\(|a^\eps_j|^2\)-h\(|a^\eps_{j-1}|^2\)\)\<\beta,\nabla  a_{j-1}^\eps\>\overline{ a_{j-1}^\eps} a^\eps_{j}\\
&\quad  = i\frac{\eps}{2}D^2
  \delta a_{j+1}^\eps, 
\end{align*}
Lemma \ref{lem:evol-norm} and the boundedness of $(\phi_j^\eps,a_j^\eps)$ in $X_{w,T}^\ell$ imply like in \cite{CaGa17b} that for $M$ large enough,
$$\max\(\vvvert  \delta\phi_{j+1}^\eps\vvvert_{\ell+1,T}^2,\vvvert  \delta a_{j+1}^\eps\vvvert_{\ell,T}^2\)\le \frac{K}{M}\( \vvvert  \delta\phi_{j}^\eps\vvvert_{\ell+1,T}^2 +
 \vvvert  \delta a_{j}^\eps\vvvert_{\ell,T}^2 \)$$
for some $K>0$ which does not depend on $\eps$ provided
$(\phi_0^\eps, a_0^\eps)_{\eps\in [0,1]}$ is uniformly bounded in
$\H_{\w_0}^{\ell+1}\times \H_{\w_0}^{\ell}$. We conclude as in \cite{CaGa17b} that provided $\ell>(d+1)/2$, possibly
increasing $M$, $(\phi_j^\eps,a_j^\eps)$ converges geometrically 
in $X_{\w,T}^\ell$
as $j\to \infty$. Uniqueness of the solution $(\phi^\eps, a^\eps)$ to
(\ref{eq:syst-grenier}) follows from the same kind of estimates as the
ones which prove the convergence. 
%\end{proof}

\section{First order approximation}
\label{sec:first}

As in the previous section, we assume $J=1$ for the sake of
conciseness. 
\begin{proof}[Proof of Theorem~\ref{th:conv-obs}]
Next, assume that
$(\phi_0,a_0)\in\H_{\w_0}^{\ell+2}\times\H_{\w_0}^{\ell+1} $. Then, in
view of Theorem~\ref{th:wp},
the solution $(\phi,a)$ to  \eqref{eq:syst-limit} belongs to
$X_{\w,T}^{\ell+1}$. Given $\eps>0$, if $(\phi_0^\eps,a_0^\eps)\in
\H_{\w_0}^{\ell+1}\times\H_{\w_0}^{\ell}$, we denote by
$(\phi^\eps,a^\eps)$ the solution to (\ref{eq:syst-grenier}). We also denote
$(\delta\phi^\eps, \delta a^\eps)=(\phi^\eps-\phi, a^\eps-a)$.  Then,
in the same fashion as above, we have 
\begin{align*}
  &\d_t \delta\phi^\eps+ \frac{1}{2}\(\<\nabla \delta\phi^\eps,
      H\nabla \phi^\eps\>  + \<\nabla\phi, H\nabla \delta
    \phi^\eps\>\)+g\(|a^\eps|^2\)\<\beta,\nabla\delta\phi^\eps\>\\ 
      &\qquad+\(g\(|a^\eps|^2\)-g\(|a|^2\)\)\<\beta,\nabla\phi\> +K*\(|a^\eps|^{2\sigma}-|a|^{2\sigma}\)=0
\end{align*}

and
\begin{align*}
&  \d_t \delta a^\eps + \<\nabla \delta\phi^\eps, H\nabla
  a^\eps\> + \<\nabla \phi, H\nabla \delta a^\eps\> +
  \frac{1}{2}\delta a^\eps D^2 \phi^\eps +
  \frac{1}{2}a D^2 \delta \phi^\eps \\
&\quad  +\<\beta,\nabla \(g\(|a^\eps|^2\)\)\>\delta a^\eps+
  \<\beta,\nabla\(g\(|a^\eps|^2\)-g\(|a|^2\)\)\>a\\
 &\quad  + h\(|a^\eps|^2\)\<\beta,\nabla a^\eps\>\overline{a^\eps} \delta a^\eps
 + h\(|a^\eps|^2\)\<\beta,\nabla a^\eps\>\overline{\delta a^\eps}  a\\
&\quad + h\(|a^\eps|^2\)\<\beta,\nabla \delta a^\eps\>|a|^2
 +\(h\(|a^\eps|^2\)-h\(|a|^2\)\)\<\beta,\nabla  a\>|a|^2  = i\frac{\eps}{2}D^2
  \delta a^\eps+i\frac{\eps}{2}D^2 a.
\end{align*}

Like in \cite{CaGa17b}, for some new constant $k$, Lemma \ref{lem:evol-norm} and Theorem \ref{th:wp} imply, for $M$ large enough, 
\begin{equation*}
    \vvvert  \delta\phi^\eps\vvvert_{\ell+1,T}^2  \le 
k\|\phi_0^\eps-\phi_0\|_{\H_{\w_0}^{\ell+1}}^2+\frac{k}{M}
 \vvvert  \delta a^\eps\vvvert_{\ell,T}^2,
\end{equation*}
and
\begin{equation*}
    \vvvert  \delta a^\eps\vvvert_{\ell,T}^2  \le 
k\|a_0^\eps-a_0\|_{\H_{\w_0}^\ell}^2+\frac{k}{M} 
\vvvert  \delta\phi^\eps\vvvert_{\ell+1,T}^2+\frac{k}{M}\eps \vvvert\delta a^\eps\vvvert_{\ell,T}\vvvert a\vvvert_{\ell+1,T}.
\end{equation*}
Possibly increasing the value of $M$ and adding the last two inequalities, we deduce
\begin{equation*}
    \vvvert  \delta\phi^\eps\vvvert_{\ell+1,T}^2+\vvvert  \delta a^\eps\vvvert_{\ell,T}^2\le C\|\phi_0^\eps-\phi_0\|_{\H_{\w_0}^{\ell+1}}^2+C\|a_0^\eps-a_0\|_{\H_{\w_0}^\ell}^2+C\eps^2,
\end{equation*}
hence Theorem \ref{th:conv-obs}. As for the choice of $M$, a careful examination of the previous inequalities shows that aside from the assumption $M\ge M(\ell+1)$, which enables to estimate the source term, $M$ can be chosen as in Theorem \ref{th:wp}, namely such that $M\ge M(\ell)$.
\end{proof}

\begin{proof}[Proof of Corollary~\ref{cor:quad}]
Notice that,  provided  $\w\ge 0$,
\begin{equation}\label{eq:obvious}
  \|\psi\|_{H^\ell(\E^d)}\le \|\psi\|_{\H_\w^\ell}.
\end{equation}
In particular, Sobolev embedding yields, for $\ell>(d+1)/2\ge 1$,
\begin{equation*}
  \|\psi\|_{L^\infty(\E^d)}\le C \|\psi\|_{\H_\w^\ell},
\end{equation*}
where $C$ is independent of $\w \ge 0$. 
With these remarks in mind, the $L^1$ estimates of
Corollary~\ref{cor:quad} follow from Theorem~\ref{th:conv-obs} and 
Cauchy-Schwarz inequality, since
\begin{equation*}
  \left\| |u^\eps|^2-|a|^2\right\|_{L^\infty_TL^1} =  \left\|
    |a^\eps|^2-|a|^2\right\|_{L^\infty_TL^1}  \le
  \|a^\eps+a\|_{L^\infty_T L^2} \|\delta a^\eps\|_{L^\infty_TL^2},
\end{equation*}
and
\begin{align*}
  \|\IM \(\eps \bar u^\eps \d u^\eps\)
  -|a|^2\d\phi\|_{L^\infty_TL^1} &\le \eps\|\IM \bar a^\eps \d
  a^\eps\|_{L^\infty_TL^1}  + \||a^\eps|^2\d\phi^\eps-|a|^2\d
  \phi\|_{L^\infty_TL^1}  \\
&\le \eps \|a^\eps\|^2_{L^\infty_TH^1}
  +\|a^\eps+a\|_{L^\infty_T L^2} \|\delta a^\eps\|_{L^\infty_TL^2}
  \|\d \phi\|_{L^\infty_TL^\infty} \\
&\quad+ \|a^\eps\|_{L^\infty_TL^\infty}
\|a^\eps\|_{L^\infty_TL^2}\|\delta \phi^\eps\|_{L^\infty_TH^1}.
\end{align*}
The $L^\infty$ estimates in space follow by replacing $L^1$ and $L^2$
by $L^\infty$ in the above inequalities, and using Sobolev embedding
again. 
\end{proof}

\section{Convergence of the wave function}
\label{sec:cv}
Again, we assume $J=1$ for the sake of conciseness.
\begin{proof}[Proof of Theorem~\ref{th:conv-wf}]
Let $\ell>(d+1)/2$, and
$(\phi_0,a_0)\in\H_{\w_0}^{\ell+2}\times\H_{\w_0}^{\ell+1}$. Theorem~\ref{th:wp}
yields a unique solution
$(\phi,a)\in X_{\w,T}^{\ell+1}$  to \eqref{eq:syst-limit}.
\smallbreak

Let  $(\phi_{10},a_{10})\in \mathcal{H}_{\w_0}^{\ell+1}\times
\mathcal{H}_{\w_0}^{\ell}$. 
Note that
\eqref{eq:syst-grenier-linearized} is a system of linear transport
equations in the unknown $(\nabla\phi_1,a_1)$, whose coefficients are smooth functions. The general theory
of transport equations (see e.g. \cite[Section~3]{BCD11}) then shows  that
\eqref{eq:syst-grenier-linearized} has a unique solution
$(\phi_1,a_1)\in C([0,T],L^2\times L^2)$. We already know by this
argument that the solution is actually more regular (in terms of
Sobolev regularity), but we shall
directly use a priori estimates in $\H^\ell_\w$ spaces. Indeed,
Lemma \ref{lem:evol-norm} implies that $(\phi_1,a_1)\in X_{\w,T}^\ell$ with, exactly as in \cite{CaGa17b},
\begin{align*}
&\vvvert\phi_1\vvvert_{\ell+1,T}^2\le \|\phi_{10}\|_{\mathcal{H}_{\w_0}^{\ell+1}}^2
+\frac{C}{M}\vvvert\phi_1\vvvert_{\ell+1,T}^2\vvvert\phi\vvvert_{\ell+1,T}+\frac{C}{M}\vvvert\phi_1\vvvert_{\ell+1,T}^2\vvvert a\vvvert_{\ell,T}^{2\gamma}\\
&\qquad+\frac{C}{M}\vvvert\phi_1\vvvert_{\ell+1,T}\vvvert\phi\vvvert_{\ell+1,T}\vvvert a\vvvert_{\ell,T}^{2\gamma-1}\vvvert a_1\vvvert_{\ell,T}+\frac{C}{M}\vvvert\phi_1\vvvert_{\ell+1,T}\vvvert a\vvvert_{\ell,T}^{2\sigma-1}\vvvert 
a_1\vvvert_{\ell,T},
\end{align*}
along with
\begin{align*}
\vvvert a_1\vvvert_{\ell,T}^2&\le \|a_{10}\|_{\mathcal{H}_{\w_0}^{\ell}}^2+\frac{C}{M}\vvvert a_1\vvvert_{\ell,T}\vvvert a\vvvert_{\ell,T}\vvvert\phi_1\vvvert_{\ell+1,T}+\frac{C}{M}\vvvert a_1\vvvert_{\ell,T}^2\vvvert\phi\vvvert_{\ell+1,T}\\
&\qquad+\frac{C}{M}\vvvert a_1\vvvert_{\ell,T}^2\vvvert a\vvvert_{\ell,T}^{2\gamma}+\frac{C}{M}\vvvert a_1\vvvert_{\ell,T}\vvvert a\vvvert_{\ell+1,T},
\end{align*}
for some $C>0$. 

Let $\ell>(d+1)/2$. For $(\phi_0,a_0)\in \H_{\w_0}^{\ell+3}\times
\H_{\w_0}^{\ell+2}$,  $(\phi_{10},a_{10})\in
\H_{\w_0}^{\ell+2}\times \H_{\w_0}^{\ell+1}$ and $(\phi_0^\eps,a_0^\eps)\in \H_{\w_0}^{\ell+1}\times
\H_{\w_0}^{\ell}$, we consider:
\begin{itemize}
\item $(\phi,a)\in X_{\w,T}^{\ell+2}$ the solution to \eqref{eq:syst-limit}.
\item $(\phi_1,a_1)\in X_{\w,T}^{\ell+1}$ the solution to
  \eqref{eq:syst-grenier-linearized}.
\item $(\phi_{\rm app}^\eps,a_{\rm app}^\eps)=(\phi,a)+\eps(\phi_1,a_1)$.
\item $(\phi^\eps,a^\eps)\in X_{\w,T}^\ell$ the solution to
  \eqref{eq:syst-grenier}.
\end{itemize}
We assume that $\|\phi_0^\eps- \phi_0-\eps\phi_{10}\|_{\H_{\w_0}^{\ell+1}}=o(\eps)$ and $\|a_0^\eps-a_0-\eps a_{10}\|_{\H_{\w_0}^\ell}=o(\eps)$.
Set 
\begin{align*}
\delta\phi_1^\eps &=\phi^\eps-\phi_{\rm app}^\eps=\phi^\eps-\phi-\eps\phi_1=\delta\phi^\eps-\eps\phi_1,\\
\delta a_1^\eps &=a^\eps-a_{\rm app}^\eps=a^\eps-a-\eps a_1=\delta
                  a^\eps-\eps a_1.
\end{align*}
The equation satisfied by $\delta\phi_1^\eps$ writes
\begin{align*}
  &\d_t \delta\phi_1^\eps+ \<\nabla\phi, H\nabla \delta \phi_1^\eps \>+ \frac{1}{2}\<\nabla\delta \phi^\eps, H\nabla \delta \phi^\eps\>
  +g(|a|^2)\<\beta,\nabla\delta\phi_1^\eps\>\\
&  +
  \(g(|a^\eps|^2)-g(|a|^2)\)\<\beta,\nabla\delta\phi^\eps\>
  +\(g(|a^\eps|^2)-g(|a|^2)-2g'(|a|^2)\RE(\overline{a}\eps a_1)\)\<\beta,\nabla\phi\>\\
&  +K*\(|a^\eps|^{2\sigma}-|a|^{2\sigma}-2\sigma|a|^{2\sigma-2}\RE(\overline{a}\eps a_1)\)=0.
\end{align*}
Moreover, the Taylor formula yields
\begin{align}\label{taylor}
&g(|a^\eps|^2)-g(|a|^2)-2g'(|a|^2)\RE(\overline{a}\eps a_1)=\(2\RE(\overline{a}\delta a_1^\eps)+|\delta a^\eps|^2\)g'\(|a|^2\)\nonumber\\
&\qquad+\(|a^\eps|^2-|a|^2\)^2\int_0^1(1-s)g''\(|a|^2+s\(|a^\eps|^2-|a|^2\)\)ds,
\end{align}
and the same identity holds for $g$ replaced by $f(r)=r^{\sigma}$. Thus, taking into
account Theorem~\ref{th:wp}, which implies
$\vvvert\phi^\eps\vvvert_{\ell+1,T},\vvvert
a^\eps\vvvert_{\ell,T}=\O(1)$, and Theorem \ref{th:conv-obs}, which
implies $\vvvert\delta\phi^\eps\vvvert_{\ell+1,T},\vvvert \delta
a^\eps\vvvert_{\ell,T}=\O(\eps)$, it follows from Lemma
\ref{lem:evol-norm} that  
\begin{align*}
   \vvvert  \delta\phi_{1}^\eps\vvvert_{\ell+1,T}^2 
   &\le \|\phi_0^\eps-\phi_0-\eps\phi_{1,0}\|_{\H_{\w_0}^{\ell+1}}  ^2+\frac{C}{M}\vvvert\delta\phi_1^\eps\vvvert_{\ell+1,T} \left[\vvvert\delta\phi_1^\eps\vvvert_{\ell+1,T}+\eps^2+\vvvert\delta a_1^\eps\vvvert_{\ell,T}\right].
\end{align*}
We deduce, for $M$ large enough,
\begin{align}\label{eq:df1}
\vvvert  \delta\phi_{1}^\eps\vvvert_{\ell+1,T}^2 
&\le C\|\phi_0^\eps-\phi_0-\eps\phi_{10}\|_{\H_{\w_0}^{\ell+1}}  ^2+\frac{C}{M}\eps^4+\frac{C}{M}\vvvert\delta a_1^\eps\vvvert_{\ell,T}^2.
\end{align}
Similarly, $\delta a_1^\eps$ solves
\begin{align*}
&\d_t \delta a_1^\eps+\<\nabla\phi, H\nabla\delta a_1^\eps\>+\<\nabla\delta\phi_1^\eps, H\nabla a\>+\<\nabla\delta\phi^\eps,H\nabla \delta a^\eps\>\\
&\quad+\frac{1}{2}a D^2\delta\phi_1^\eps+\frac{1}{2}\delta a_1^\eps D^2\phi+\frac{1}{2}\delta a^\eps D^2\delta\phi^\eps\\
&\quad+\<\beta,\nabla\left[\left(g(|a^\eps|^2)-g(|a|^2)-2\eps g'(|a|^2)\RE(\overline{a}a_1)\right)a\right]\>\\
&\quad+\<\beta,\nabla\left[\left(g(|a^\eps|^2)-g(|a|^2)\right)\eps a_1\right]\>+\<\beta,\nabla\left[g(|a^\eps|^2)\delta a_1^\eps\right]\>
=\frac{i\eps^2}{2}D^2a_1+\frac{i\eps}{2} D^2\delta a_1^\eps.
\end{align*}
From (\ref{taylor}),  Theorems~\ref{th:wp} and \ref{th:conv-obs}, and
Lemma~\ref{lem:evol-norm}, we deduce
\begin{align}\label{eq:da1}
\vvvert\delta a_1^\eps\vvvert_{\ell,T}^2\leq C\|a_0^\eps-a_0-\eps
  a_{10}\|_{\H_{\w_0}^\ell}^2+\frac{C}{M}\eps^4+\frac{C}{M}\vvvert\delta
  \phi_1^\eps\vvvert_{\ell+1,T}^2. 
\end{align}
Adding (\ref{eq:df1}) and (\ref{eq:da1}), \eqref{eq:est2} follows. Like in the proof of Theorem \ref{th:conv-obs}, a
careful examination of the inequalities that we have used shows that
all the above estimates are valid provided that we assume $M\ge M(\ell)$, the
constant provided by Theorem~\ref{th:wp}, and also $M\ge \max(M(\ell+1),M(\ell+2))$ in order to estimate the source terms. 
\smallbreak

To
complete the proof of 
Theorem \ref{th:conv-wf}, consider the point-wise estimate
\begin{align*}
  \left|a^\eps e^{i\phi^\eps/\eps}-a e^{i\phi_1}e^{i\phi/\eps}
  \right|&\le \left|a^\eps-a \right|
+\left|a^\eps \right|
\left|e^{i\phi^\eps/\eps}-e^{i(\phi+\eps\phi_1)/\eps} \right|\\
&\le \left|a^\eps-a \right|
+\left|a^\eps \right|
\left|2\sin\(\frac{\phi^\eps-\phi-\eps\phi_1 }{2\eps}\)\right|\\
&\le \left|\delta a^\eps \right|
+\frac{1}{\eps}\left|a^\eps \right|
\left|\delta \phi_1^\eps\right|.
\end{align*}
We then conclude like in the proof of Corollary~\ref{cor:quad}, by
using Cauchy-Schwarz inequality, \eqref{eq:obvious}, and Sobolev
embedding.  
\end{proof}

\bibliographystyle{siam}
\bibliography{biblio}

\end{document}